\newtheorem{theorem}{Theorem}[section]
\newtheorem{proposition}[theorem]{Proposition}
\newtheorem{lemma}[theorem]{Lemma}
\newtheorem{corollary}[theorem]{Corollary}
\theoremstyle{remark}
\newtheorem{remark}[theorem]{Remark}
\renewenvironment{proof}{{\noindent\bf Proof.}}{\hfill $\Box$\par\vskip3mm}
\newcommand{\Hom}{{\rm Hom}}
\newcommand{\Aa}{\mathcal{A}}
\newcommand{\Cc}{\mathcal{C}}
\newcommand{\Ee}{\mathcal{E}}
\newcommand{\Gg}{\mathcal{G}}
\newcommand{\Hh}{\mathcal{H}}
\newcommand{\Mm}{\mathcal{M}}
\def\KK{{\mathbb K}}
\theoremstyle{definition}
\theoremstyle{remark}
\let\c@equation\c@thm
\numberwithin{equation}{section}
\title{Generators for Comonoids and Universal Constructions}
\author{ADNAN H. ABDULWAHID AND\\
  MIODRAG C. IOVANOV}
\date{}
\begin{document}

\begin{abstract}
\noindent We investigate cofree coalgebras, and limits and colimits of coalgebras in some abelian monoidal categories of interest, such as bimodules over a ring, and modules and comodules over a bialgebra or Hopf algebra. We find concrete generators for the categories of coalgebras in these monoidal categories, and explicitly construct cofree coalgebras, products and limits of coalgebras in each case. This answers an open question in \cite{Agore} on the existence of a cofree coring, and constructs the cofree (co)module coalgebra on a $B$-(co)module, for a bialgebra $B$. 
\end{abstract}

\thanks{2010 \textit{Mathematics Subject Classifications}. 18A35, 18A40, 16T15, 16S40}
\date{}
\keywords{monoid, comonoid, free monoid, cofree, coalgebra, coring, (co)module coalgebra, complete, limits, bialgebra, Hopf algebra}

\maketitle


\noindent

\section{Introduction}

The free algebra on a set or vector space, i.e. non-commutative polynomial algebras, as well as the free group or monoid, represent fundamental constructions in algebra, and their importance is already well established. Moreover, the construction of the free algebra - the tensor algebra - is suitable for use in monoidal categories. We refer to \cite{Bakalov}, \cite{Mac Lane} for basics on monoidal categories. A monoid in the monoidal category $(\Cc,\otimes,I)$ (or algebra when $\Cc$ is abelian) is a triple $(A,m,u)$, where $m:A\otimes A\rightarrow A$ and $u:I\rightarrow A$ are the multiplication and unit respectively, satisfying appropriate associativity and unital conditions which are the categorical analogue of the usual monoid/algebra axioms; morphisms of monoids are defined by analogy too. Let $Mon(\Cc)$ denote the category of monoids in $\Cc$, and $U:Mon(\Cc)\rightarrow \Cc$ be the forgetful functor. By a basic observation (see \cite[Chapter 7]{Mac Lane}), if $(\Cc,\otimes,I)$ has denumerable coproducts and $\otimes$ commutes with these coproducts, then the tensor algebra $T(V)=\bigoplus\limits_{n\geq 0}V^{\otimes n}$ gives a left adjoint to $U$ (the free algebra). 


A comonoid in $\Cc$ is simply a monoid in the opposite category $\Cc^0$. Denote $CoMon(\Cc)=Mon(\Cc^0)$. Both monoids and comonoids are important objects; hence, finding a universal ``comonoid" is a basic question of interest. Specifically, the problem formulates as follows: given a monoidal category $\Cc$, does the forgetful functor $Mon(\Cc^0)\rightarrow \Cc^0$ have a left adjoint, or equivalently, does the forgetful $CoMon(\Cc)\rightarrow \Cc$ have a right adjoint, the ``cofree" comonoid? 
This is often of interest for the same types of categories in which one considers monoids and to which Mac Lane's construction of the free monoid applies. However, to use it to obtain cofree comonoids in $\Cc$, equivalently, for monoids in $\Cc^0$, one needs that $\otimes$ commutes with denumerable products in $\Cc$, and this is almost never the case for any monoidal category of interest; this fails, for example, even in the case of vector spaces.

\vspace{.2cm}

Hence, one needs different considerations to construct cofree comonoids, and, of course, the natural method is to use Freyd's adjoint functor theorem. We will consider this question for several categories that present interest, namely, the category ${}_A\Mm_A$ of bimodules over a ring $A$, and the categories of modules ${}_B\Mm$ or comodules $\Mm^B$ over a bialgebra (or Hopf algebra) $B$. We will denote $Alg(\Cc)=Mon(\Cc)$ and $CoAlg(\Cc)=CoMon(\Cc)$ when $\Cc$ is abelian, i.e algebras=monoids and coalgebras=comonoids as it is the usual terminology in this case. We recall that a coalgebra in the category of bimodules over a ring $A$ is called an $A$-coring. Corings have been extensively studied over the past 15 years, especially due to the fact that they are very common in algebra and representation theory. Many important categories can be realized as comodules over suitable corings: modules, categories of actions and co-actions of bialgebras, modules over algebras in certain monoidal categories, chain complexes, (differentially) graded modules, quasi-coherent sheaves, generalized quiver representations, etc.; we refer the reader to \cite{Brzezinski} and references therein. The existence of cofree coalgebras over fields has been known for a long time \cite{Block}, \cite{Dascalescu}; it was obtained for coalgebras over arbitrary commutative rings in \cite{Barr}, and some constructions for the non-coassociative case are given in \cite{Fox}. The cofree coring over $A^n$ is constructed in \cite{Hazewinkel}, and the problem is answered for von Neumann regular rings in \cite{Porst2} (see also \cite{Porst1}). The main difficulty in generalizing to arbitrary bimodules (\cite{Fox}, \cite{Porst2}) is to deal with the pathological behavior of the tensor product over an arbitrary ring $A$. The question of whether a cofree coring always exists is proposed as an open problem in \cite{Agore}. A closely related question turns out to be whether the category of comonoids in a monoidal category is complete, and in particular, if limits of $A$-corings exist (see \cite{Agore}). 

\vspace{.2cm}

A natural setting for the study of such questions is, of course, the general categorical framework of the Special Adjoint Functor Theorem [SAFT];
(see also \cite{Porst2}). We investigate cofree coalgebras and completeness for the category of coalgebras $CoAlg(\Cc)$ for the case when $\Cc={}_A\Mm_A$, or $\Cc={}_B\Mm$ or $\Cc=\Mm^B$, which are of particular interest among monoidal categories. In general, it is easy to note that, when $\Cc$ is co-wellpowered and cocomplete, then $CoAlg(\Cc)$ is cocomplete, co-wellpowered and the forgetful preserves colimits. Although existence of the adjoints can be obtained in greater generality, our emphasis is on finding concrete constructions of these universal objects, which can be written out in terms of generators of the respective category of coalgebras. Hence, our main goal is to give explicit sets of generators for each of the above categories, and obtain these adjoints explicitly. These results can also be regarded as counterparts of the fundamental (local) finiteness theorem of coalgebras over fields. First, in Section \ref{s.p} we recall the construction of the adjoint in SAFT, and give a simple construction using the same idea as the proof of SAFT, that explicitly describes the products and limits in the category of comonoids $CoMon(\Cc)$ in a suitable monoidal category $\Cc$. In Section \ref{s.1} we note that $A$-corings are generated by corings of infinite cardinality less or equal to that of $A$ and $\aleph_0$; over von-Neumann regular rings, or countably generated algebras, countably generated or countable dimensional corings generate the category. In Section \ref{s.3} we show that if $B$ is a bialgebra, $B$-module coalgebras which are finitely generated over $B$ form a set of generators for the category of all $B$-module coalgebras; we also show that finite dimensional $B$-comodule coalgebras generate the category $CoAlg(\Mm^B)$. When $B$ is a Hopf algebra, comatrix coalgebras in $\Mm^B$ can be defined and they generate $CoAlg(\Mm^B)$. We then apply the formulas obtained in Section \ref{s.p} to concretely describe cofree coalgebras and limits in each case. Our emphasis is on the constructive direct approach, which yields working formulas for these universal objects; these can useful to the general reader working with concrete monoidal categories. 

\section{Preliminaries}\label{s.p}

In this section, we recall the construction used in the Special Adjoint Functor Theorem (or SAFT) due to P. Freyd, and use it to provide formulas that will be applied in several situations. A functor $U$ that has a right adjoint commutes with limits, and the SAFT says that under additional natural hypothesis, this is enough to find such an adjoint. Recall that a category $\Cc$ is called co-wellpowered if for every object $C$ in $\Cc$, the equivalence classes of epimorphisms with source $C$ form set. The (dual) SAFT asserts that {\it if $\mathcal{C}$ is a cocomplete (has small colimits), co-wellpowered category and with a generating set, then every co-continuous functor $U$ (i.e. $U$ commutes with colimits) from $\mathcal{C}$ to a locally small category $\Mm$ has a right adjoint}.



We also briefly recall the construction of the adjoint, since it will be of importance in our examples. It can be formulated in the following way. Let $\Gg$ be a set of generators of $\Cc$, and for each $M$ in $\Mm$, let $\overline{\Ee}$ be the category whose objects are pairs $(C,f)$ with $C$ in $\Cc$, and $f:U(C)\rightarrow M$ is a morphism in $\Mm$. Morphisms in this category are $(\varphi,*):(C,f)\rightarrow (C',f')$ such that $\varphi:C\rightarrow C'$ is a morphism in $\Cc$, and $*$ means simply that the condition $f'U(\phi)=f$ is satisfied. Consider also $\Ee$ the full subcategory of $\overline{\Ee}$ of objects of the type $(G,f)$ for $G\in \Gg$. Let $F:\Ee\rightarrow \Cc$ be defined by $F(G,f)=G$ and $F(\varphi,*)=\varphi$ on morphisms. Then $F$ is a functor, and since the class of objects of $\Ee$ is a set (since $\Cc$ is co-wellpowered), it has a colimit. Let $(R(M),f_M)=\lim\limits_{\stackrel{\longrightarrow}{(G,f)\in\Ee}}F(G)=\lim\limits_{\longrightarrow}F$. Then $(R(M),f_M)$ is a final object in $\Ee$, which in turn helps proving that it is a final object in  $\overline{\Ee}$. We need to recall this argument, as it will be used later. Given $C$ in $\Cc$ together with a morphism $f:U(C)\rightarrow M$ in $\Mm$, use the generators $\Gg$ to write $C=\lim\limits_{\stackrel{\longrightarrow}{p:G\rightarrow C, G\in \Gg}}G$ and 
$(C,f)=\lim\limits_{\stackrel{\longrightarrow}{p:G\rightarrow C}}(G,fU(p)).$  
Since $(R(M),f_M)=\lim\limits_{\stackrel{\longrightarrow}{h:U(G)\rightarrow M}}(G,h)$, and there is a canonical map between the two inductive systems induced by the correspondence $(G,p)\longmapsto (G,fU(p))$,
we get a canonical morphism $\psi_C:C\rightarrow R(M)$ in $\Cc$ which turns out unique with $f_M\psi_C=f$.


We will need the following short hand formulas to explicitly compute several functors. 
\begin{eqnarray}
R(M)& = & \lim\limits_{\stackrel{\longrightarrow}{h: U(G)\rightarrow M|G\in\Gg}}G \label{e.co1}\\
(R(M),f_M) & = & \lim\limits_{\stackrel{\longrightarrow}{h: U(G)\rightarrow M|G\in\Gg}}(G,h:U(G)\rightarrow M)\label{e.co2}
\end{eqnarray}

We will also need the following observation, which is similar to the above discussion. Let $\Mm,\Cc$ be categories, $U:\Cc\rightarrow \Mm$ be a faithful functor. For convenience, we will interpret $U$ as a ``forgetful" functor and slightly abuse notation sometimes to write $U(C)=C$; similarly, if $f:U(C)\rightarrow U(C')$ is a morphism in $\Mm$, we say $f$ is a morphism in $\Cc$ if $f=U(g)$ for some (unique) $g\in\Hom_\Cc(C,C')$. 
Fix an object $N$ in $\Mm$ and objects $(C_i)_{i\in I}$ in $\Cc$ and a family of morphisms $f_i:N\rightarrow U(C_i)$ in $\Mm$; the set $I$ can be empty. Assume $\Cc$ and $\Mm$ each have an initial object denoted $0$ and $U(0)=0$. Let $\Hh$ be the comma category whose objects are $(E,p), E\in\Cc, p\in \Hom_\Mm(U(E),N)$ and such that $f_ip$ is a morphism in $\Cc$ for all $i$; the morphisms $(E,p)\rightarrow (E',p')$ of this category are given by $h\in \Hom_\Cc(E,E')$ with $p'h=p$ (i.e. $p'U(h)=p$). We show that this category $\Hh$ has a final object under the same  conditions as in SAFT; when $I=\emptyset$, this is exactly the construction in SAFT.

\begin{proposition}\label{p.cat}
If $\Cc$ is cocomplete, co-wellpowered and has a set $\Gg$ of generators, and $\Mm,\Cc$ have the same initial object $0$. Then the category $\Hh$ has a final object, which we denote by $(\Hh_0(N,(f_i)_i),p_0)$.
\end{proposition}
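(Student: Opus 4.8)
The plan is to imitate, inside the constrained comma category $\Hh$, the construction of the final object recalled above for the unconstrained comma category $\overline{\Ee}$. Let $\Ee$ denote the full subcategory of $\Hh$ whose objects are the pairs $(G,h)$ with $G\in\Gg$ and $h\in\Hom_\Mm(U(G),N)$ for which $f_ih$ is a morphism in $\Cc$ for every $i$. Since $\Gg$ is a set and $\Mm$ is locally small, the objects of $\Ee$ form a set, so $\Ee$ is small; let $F:\Ee\to\Cc$, $F(G,h)=G$, be the projection and set $H=\varinjlim_{\Ee}F$, the colimit in $\Cc$ (the empty colimit being the common initial object $0$ when $\Ee=\emptyset$, which is why the shared initial object is assumed). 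Write $\iota_{(G,h)}:G\to H$ for the coprojections. As in the SAFT construction, the images $U(\iota_{(G,h)})$ form a colimit cocone, so the structure maps $h$, which themselves form a cocone on $U\circ F$, induce a unique $p_0:U(H)\to N$ with $p_0\,U(\iota_{(G,h)})=h$.

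The first genuinely new point, and the one I expect to be the main obstacle, is to check that $(H,p_0)$ lies in $\Hh$ and not merely in $\overline{\Ee}$; that is, that each $f_ip_0$ lifts to a morphism of $\Cc$. Fix $i$. For every $(G,h)\in\Ee$ the defining condition, together with faithfulness of $U$, provides a unique $g_i^{(G,h)}:G\to C_i$ in $\Cc$ with $U(g_i^{(G,h)})=f_ih$. A short diagram chase, using faithfulness again, shows that the family $\bigl(g_i^{(G,h)}\bigr)$ is a cocone on $F$ with apex $C_i$: for a morphism $\varphi:(G,h)\to(G',h')$ of $\Ee$ one has $U(g_i^{(G',h')}\varphi)=f_ih'U(\varphi)=f_ih=U(g_i^{(G,h)})$, whence $g_i^{(G',h')}\varphi=g_i^{(G,h)}$ by faithfulness. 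The colimit therefore yields a unique $G_i:H\to C_i$ with $G_i\,\iota_{(G,h)}=g_i^{(G,h)}$, and precomposing with the coprojections (jointly epimorphic after applying $U$) forces $U(G_i)=f_ip_0$. Thus each $f_ip_0$ is a morphism of $\Cc$ and $(H,p_0)\in\Hh$.

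It then remains to show that $(H,p_0)$ is final in $\Hh$. Given any $(E,q)\in\Hh$, I would present $E$ as the canonical colimit $\varinjlim_{p:G\to E,\,G\in\Gg}G$ over its generators. Here the constraint on $(E,q)$ is precisely what makes the restriction land in $\Ee$: writing $f_iq=U(g_i^E)$ with $g_i^E:E\to C_i$, one gets $f_i\,qU(p)=U(g_i^Ep)$ for each $p:G\to E$, so $(G,qU(p))\in\Ee$. The assignment $(G,p)\mapsto(G,qU(p))$ respects morphisms, so composing with the coprojections produces a cocone on the generator diagram of $E$ with apex $H$, and hence a unique $h:E\to H$ in $\Cc$ with $h\,p=\iota_{(G,qU(p))}$ for all $p$. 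Testing against the (jointly epimorphic) $U(p)$ gives $p_0U(h)=q$, so $h$ is a morphism $(E,q)\to(H,p_0)$ of $\Hh$. Uniqueness of this factorisation is the finality statement and follows exactly as in the recalled SAFT construction, co-wellpoweredness of $\Cc$ being what upgrades the weakly final colimit object to a genuinely final one.

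In summary, the whole argument is the SAFT construction carried out verbatim except that one works in the smaller category $\Hh$, and the only new work is the verification in the second paragraph that the colimit object satisfies the lifting constraints. That step is the heart of the matter: it rests on faithfulness of $U$ (which makes the partial lifts $g_i^{(G,h)}$ unique, hence automatically coherent into a cocone) together with the fact that the coprojections into $H$ stay jointly epimorphic after applying $U$. This is exactly the feature that separates $\Hh$ from the unconstrained comma category of SAFT, and it is where I expect the bookkeeping to require the most care.
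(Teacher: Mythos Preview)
Your proof is correct and follows essentially the same approach as the paper: define the small subcategory $\Ee=\Hh_0$ of $\Hh$ indexed by generators, take its colimit $H$ in $\Cc$, use that $U$ carries this to a colimit (or at least a jointly epimorphic family) in $\Mm$ to verify that each $f_ip_0$ lifts to $\Cc$, and then defer finality to the SAFT argument already recalled. The only cosmetic differences are that the paper phrases the key lifting step via uniqueness in the $\Mm$-colimit rather than joint epimorphicity, and that the paper notes $\Hh_0$ is nonempty because $(0,0\to N)$ always lies in it (so your empty-colimit remark is unnecessary); your closing comment that co-wellpoweredness ``upgrades weakly final to final'' is a bit imprecise, but the paper is equally informal about where that hypothesis is actually consumed.
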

\begin{proof}
Let $\Hh_0$ be the full subcategory of $\Hh$ whose objects $(G,p)$ are such that $G\in\Gg$. Note that $\Hh_0$ has at least one object $0\in \Cc$, since $0$ is also the initial object in $\Mm$, so the compositions $0\rightarrow N\stackrel{f_i}{\rightarrow}C_i$ are morphisms in $\Cc$. Let
\begin{eqnarray}
E_0=\lim\limits_{\stackrel{\longrightarrow}{(G,p)\in\Hh_0}}G\label{eq.1}
\end{eqnarray}
be a colimit in $\Cc$, which is also a colimit in $\Mm$ (i.e. $U(E_0)=\lim\limits_{\longrightarrow}U(G)$), and let $p_0:E_0\rightarrow N$ be the canonical morphism in $\Mm$ obtained from this colimit. Let $\sigma_{(G,p)}:G\rightarrow E_0$ be the canonical morphisms (in $\Cc$) of the colimit, so $p_0\sigma_{(G,p)}=p$ (more precisely, $p_0U(\sigma_{(G,p)})=p$). 
$$\xymatrix{
G\ar[r]^{\sigma_{(G,p)}}\ar[dr]_{p} & E_0\ar[d]^{p_0} \ar[dr]^{f_ip_0} \\
& N \ar[r]_{f_i} & C_i 
}$$
We show that $p_0$ is a morphism in $\Hh$. Fix $i$, and consider the morphisms $f_ip:G\rightarrow C_i$ which are morphisms in $\Cc$ (since $(G,p)$ is in $\Hh$; that is, $f_ip=U(h_i)$); there is a unique morphism in  $\theta:E_0\rightarrow C_i$ in $\Cc$ such that $\theta\sigma_{(G,p)}=f_ip$ for all $(G,p)$. But $f_ip_0$ (a morphism in $\Mm$) already satisfies this equation, and since the colimit in \ref{eq.1} is also a colimit in $\Mm$ by hypothesis, we obtain that $\theta=f_ip_0$ (obviously, here we implicitly use that $U$ is faithful). Hence, $f_ip_0$ is a morphism in $\Cc$. This shows that $p_0$ is in $Mor(\Hh)$. 
The fact that $(E_0,p_0)$ is the final object in the category $\Hh$ follows as in the proof of SAFT, along the lines noted in the comments preceding this proposition.
\end{proof}


\begin{remark}\label{r.cat}
We will use the above notation $(\Hh_0(N;(f_i)_i),p_0)$ and proposition to construct limits. Let $\Mm,\Cc$ be a cocomplete categories, $U:\Cc\rightarrow \Mm$ a faithful functor preserving colimits. Assume $\Mm$ is also complete, $\Cc$ is co-wellpowered, has a set of generators $\Gg$ and $\Cc$ and $\Mm$ have initial objects $0$ with $U(0)=0$. Then $\Cc$ is complete (i.e. has all small limits), which can be computed as follows.\\
Let $F:\Aa\rightarrow \Cc$ be a functor from a small category to $\Cc$, and let $P=\lim\limits_{\longleftarrow}UF$ be the limit (in $\Mm$) of $UF$, with canonical morphisms $q_a:P\rightarrow UF(a)$ in $\Mm$, $a\in\Aa$, i.e. we may write $q_a:P\rightarrow F(a)$ are morphisms in $\Mm$. Let $(E_0,p_0)=(\Hh_0(P,(q_a)_a),p_0)$, $p_0:E_0\rightarrow P$ in $\Mm$ (more precisely $p_0:U(E_0)\rightarrow P$). 
We note that $(E_0,(\pi_a)_a)$ with $\pi_a=q_ap_0$ is the limit of $F$. Of course, $\pi_a$ are also morphisms in $\Cc$ by construction. To see $E_0$ is this limit, consider $h_a:C\rightarrow F(a)$ a family of morphisms in $\Cc$ which is a cone on $F$ (so if $x:a\rightarrow b$ then $h_bF(x)=h_a$).
$$\xymatrix{
& C\ar@{..>}[dl]_{s} \ar@{..>}[d]_{h} \ar[dr]^{h_a} &  \\
E_0\ar[r]_{p_0} & P\ar[r]_{q_a} & F(a)
}$$
 Then $U(h_a)\in Mor(\Mm)$ and $U(h_a)$ is a cone on $UF$, and by applying the limit property we get a unique morphism $h$ in $\Mm$ with $q_ah=h_a$. Since $q_ah=h_a$ are in fact morphism in $\Cc$ (images of $U$), there is a unique $s$ in $\Cc$ with $p_0s=h$, so $\pi_as=q_ah=h_a$. The uniqueness of $s$ with this property follows again easily by backward chase using the universal properties of $E_0$ and $P$. 
\end{remark}

We note that a similar process is used in \cite{Agore} for describing products of coalgebras over a field, but with an extra-intermediate step: the canonical map $p_0:E_0\rightarrow P$ is obtained using the right adjoint $R$ of $U$ and the canonical map $c_P:R(P)\rightarrow P$; of course, our $p_0$ factors through $R(P)$, but this intermediate step is not necessary in general. 
We can also recall the following easy observation. 

\begin{proposition}\label{p.cowp}
Let $\Cc$ be a monoidal category, $CoMon(\Cc)$ be the category of comonoids of $\Cc$ and $U:CoMon(\Cc)\rightarrow \Cc$ be the forgetful functor. \\
(i) If $\Cc$ is cocomplete, then $CoMon(\Cc)$ is cocomplete and $U$ preserves colimits. \\
(ii) If furthermore $\Cc$ is co-wellpowered, then so is $CoMon(\Cc)$. 
\end{proposition}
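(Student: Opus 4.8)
The plan is to prove (i) by showing that $U$ \emph{creates} colimits, and then to deduce (ii) from (i) together with the co-wellpoweredness of $\Cc$. For (i), I would start from a small diagram $(C_j,\Delta_j,\varepsilon_j)_{j}$ in $CoMon(\Cc)$ and let $(C,(\iota_j)_j)$ be the colimit of the underlying objects in $\Cc$, which exists by cocompleteness. The point is to lift the comonoid structure to $C$ using only the universal property of $C$ and the functoriality of $\otimes$ — crucially, without assuming that $\otimes$ commutes with colimits, which is what makes the statement hold in full generality. Indeed, the morphisms $(\iota_j\otimes\iota_j)\Delta_j\colon C_j\to C\otimes C$ form a cocone: for a morphism $\varphi\colon C_j\to C_k$ in the diagram one has $\iota_k\varphi=\iota_j$ and $\Delta_k\varphi=(\varphi\otimes\varphi)\Delta_j$, so $(\iota_k\otimes\iota_k)\Delta_k\varphi=(\iota_j\otimes\iota_j)\Delta_j$. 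The universal property of $C$ then yields a unique $\Delta\colon C\to C\otimes C$ with $\Delta\iota_j=(\iota_j\otimes\iota_j)\Delta_j$, and the cocone $(\varepsilon_j)_j$ produces a unique $\varepsilon\colon C\to I$.

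The next step is to verify the coassociativity and counit axioms for $(C,\Delta,\varepsilon)$. Each axiom is an equality of two morphisms out of $C$, so by the universal property of $C$ it suffices to check it after precomposing with every $\iota_j$; precomposing and repeatedly using the defining relations for $\Delta$ and $\varepsilon$ reduces each axiom to the corresponding axiom for $(C_j,\Delta_j,\varepsilon_j)$, which holds. By construction the $\iota_j$ are comonoid morphisms, and given any comonoid $D$ with a compatible cocone of comonoid morphisms $g_j\colon C_j\to D$, the induced morphism $g\colon C\to D$ in $\Cc$ is again seen to be a comonoid morphism by testing on the $\iota_j$. Hence $(C,\Delta,\varepsilon)$ is the colimit in $CoMon(\Cc)$ and $U$ preserves (indeed creates) it.

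For (ii), the key observation is that a colimit-preserving functor preserves epimorphisms. Since $CoMon(\Cc)$ is cocomplete by (i), every morphism $f\colon C\to D$ has a cokernel pair, and $f$ is epi precisely when the two legs of this cokernel pair coincide; as $U$ preserves this pushout, $U(f)$ is then epi in $\Cc$. Now fix a comonoid $C$ and an epimorphism $f\colon C\to D$. Since $\Cc$ is co-wellpowered, $U(f)$ is isomorphic in $\Cc$ to a member $e\colon U(C)\to X$ of a fixed set $S$ of representatives; transporting the comonoid structure of $D$ along the underlying isomorphism exhibits $f$ as isomorphic, in $CoMon(\Cc)$, to a comonoid epimorphism with underlying map $e$ and target $X$ carrying some comonoid structure. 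Such a structure is a pair of morphisms in $\Hom_\Cc(X,X\otimes X)\times\Hom_\Cc(X,I)$, hence ranges over a set. Therefore the epimorphisms out of $C$ fall, up to isomorphism, into a set indexed by $S$ together with these structures, so $CoMon(\Cc)$ is co-wellpowered.

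I expect the main obstacle to be part (ii): the assertion that $U$ preserves epimorphisms is not immediate, since epimorphisms are not themselves colimits, and the cokernel-pair argument — available precisely because (i) furnishes cocompleteness — is exactly what bypasses this. The remaining bookkeeping, converting a set of underlying epimorphisms into a set of comonoid epimorphisms, is routine once one records that the comonoid structures on a fixed object form a set.
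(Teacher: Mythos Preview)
Your argument is correct. For (i) you give the standard creation-of-colimits proof in full, whereas the paper simply cites it as well known; nothing to add there. For (ii) your route and the paper's diverge slightly. The paper argues that the map from epi-classes in $CoMon(\Cc)$ to epi-classes in $\Cc$ (induced by $U$) is \emph{injective}: given two comonoid epimorphisms $\mu\colon C\to D$, $\nu\colon C\to E$ whose underlying maps are equivalent via a $\Cc$-isomorphism $h\colon D\to E$, one checks directly that $h$ is a comonoid morphism by cancelling $\mu$ from the equations $\Delta_E h\mu=(h\otimes h)\Delta_D\mu$ and $\varepsilon_E h\mu=\varepsilon_D\mu$. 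This is a little sharper than your bound by (underlying epi representatives)$\times$(comonoid structures on the target), and it avoids the mild extra hypothesis of local smallness of $\Cc$ that your counting of $\Hom_\Cc(X,X\otimes X)\times\Hom_\Cc(X,I)$ requires. On the other hand, the paper's cancellation step tacitly uses that $\mu$ is epi in $\Cc$, which is exactly the fact you justify explicitly via the cokernel-pair argument; so your proof fills a gap the paper leaves to the reader. Either approach is fine.
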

\begin{proof}
(i) is well known and follows similar to the case of bimodules $\Cc={}_A\Mm_A$ \cite{Brzezinski}.\\
(ii) It is enough to show that if $\mu:C\rightarrow D$ and $\nu:C\rightarrow E$ are in $Epi(CoMon(\Cc))$ and equivalent as epimorphisms in $\Cc$, then they are equivalent in $Epi(CoMon(\Cc))$ (as epimorphisms). Let $h:D\rightarrow E$ be an isomorphism in $\Mm$ for which $h\mu=\nu$ and we show that $h$ is in fact an isomorphism in $\Cc$. 
\[
\xymatrix{
C\ar@/^/[rr]^{\nu} \ar[r]|{\mu} \ar[d]^{\Delta_{\!_{C}}}  & D \ar[r]|{h} \ar[d]^{\Delta_{\!_{D}}} & E \ar[d]^{\Delta_{\!_{E}}} &  \\ 
C \otimes_{A} C \ar[r]^{\mu \otimes \mu} \ar@/_/[rr]_{\nu \otimes \nu} & D \otimes_{A} D \ar[r]^{h \otimes h} & E \otimes_{A} E }
\hspace{35pt}
\xymatrix{
C\ar@/^/[rr]^{\nu} \ar[r]|{\mu} \ar[rd]_{\epsilon_{\!_{C}}}  & D \ar[r]|{h} \ar[d]^{\epsilon_{\!_{D}}} & E \ar[ld]^{\epsilon_{\!_{E}}} \\ 
& A }
\]
The first and outside diagrams on the left picture are commutative since $\mu$ and $\nu$ are morphisms in $CoMon(\Cc)$, so $\Delta_{\!_{E}} h \mu = \Delta_{\!_{E}} \nu = (\nu \otimes \nu) \Delta_{\!_{C}} = (h \otimes h) (\mu \otimes \mu) \Delta_{\!_{C}} = (h \otimes h)\Delta_{\!_{D}} \mu $.
Since $\mu $ is an epi in $\Cc$, $\Delta_{\!_{E}} h = (h \otimes h)\Delta_{\!_{D}}$. Similarly, we have 
$\epsilon_{\!_{E}} h \mu = \epsilon_{\!_{E}} \nu = \epsilon_{\!_{C}}=\epsilon_{\!_{D}} \mu$, and it follows that $\epsilon_{\!_{E}} h = \epsilon_{\!_{D}}$. Hence, $h$ is a morphism in $CoMon(\Cc)$, and so is its $\Cc$ inverse.
\end{proof}


\section{Cofree corings and limits}\label{s.1}

Let $A$ be a ring and ${}_A\mathcal{M}_A$ the category of $A$-bimodules. Recall that an $A$-coring is a comonoid (or coalgebra) in the monoidal category ${}_A\Mm_A$ (see \cite{Brzezinski}). Let $Crg_A$ be the category of $A$-corings, and $U:Crg_A \rightarrow {}_A\mathcal{M}_A$ the forgetful functor. A subcoring $D$ of a coring $C$ is defined to be an $A$-sub-bimodule $D$ of $C$ such that the inclusion map $D\hookrightarrow C$ is a morphism of corings. Using the observations in the previous section and Proposition \ref{p.cowp}, we have

\begin{proposition}\label{p.coring1} 
$Crg_A$ is cocomplete, co-wellpowered and $U:Crg_A\rightarrow {}_A\Mm_A$ preserves colimits.
\end{proposition}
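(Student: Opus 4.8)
The plan is to derive this proposition directly from Proposition \ref{p.cowp}. By definition an $A$-coring is a comonoid in the monoidal category $\Cc={}_A\Mm_A$ of $A$-bimodules, with tensor product $\otimes_A$ and unit object $A$; thus $Crg_A=CoMon({}_A\Mm_A)$ and the functor $U$ of the statement is precisely the forgetful functor appearing in Proposition \ref{p.cowp}. It therefore suffices to check that $\Cc={}_A\Mm_A$ meets the two hypotheses of that proposition, namely that it is cocomplete and co-wellpowered, and then simply read off all three conclusions.

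First I would record cocompleteness. The category ${}_A\Mm_A$ is equivalent to the category of left modules over the enveloping ring $A\otimes_\ZZ A^{op}$, hence is a Grothendieck abelian category and in particular has all small colimits. For co-wellpoweredness I would observe that the equivalence classes of epimorphisms out of a fixed bimodule $M$ are classified by the sub-bimodules of $M$ (an epi is determined up to equivalence by its kernel), and that these form a set, being a subcollection of the power set of $M$; hence ${}_A\Mm_A$ is co-wellpowered.

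With these two facts in hand the proof closes with no further work: Proposition \ref{p.cowp}(i) yields that $CoMon({}_A\Mm_A)=Crg_A$ is cocomplete and that $U$ preserves colimits, while part (ii), applicable since ${}_A\Mm_A$ is co-wellpowered, gives that $Crg_A$ is co-wellpowered as well. I do not expect any genuine obstacle here: all of the categorical content has already been isolated in Proposition \ref{p.cowp}, and the only remaining input is the routine, standard observation that the bimodule category is a cocomplete, co-wellpowered monoidal category.
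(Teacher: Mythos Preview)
Your proposal is correct and matches the paper's own approach exactly: the paper states this proposition immediately after the sentence ``Using the observations in the previous section and Proposition~\ref{p.cowp}, we have'' and gives no further argument, so you have simply supplied the routine verification that ${}_A\Mm_A$ is cocomplete and co-wellpowered that the paper leaves implicit.
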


To explicitly find products of and co-free corings, we focus on constructing generators. We start with a simple motivational observation, which explains the main idea and recovers the result of \cite{Porst2}. A submodule $N$ of a left $A$-module $M$ is called a pure submodule provided that for any right $A$-module $P$, the induced map $P \otimes_{A} N \rightarrow P \otimes_{A} M$ is mono. 
Assume $A$ is von Neumann regular, so all $A$-modules are flat, and all monomorphisms are pure. If $C$ is an $A$-coring, and $S$ is an $A$-submodule of $C$, let $S'\supseteq S$ be the sub-bimodule generated by all $m,m_1,m_2$ for $m\in S$ and $\Delta(m)=\sum\limits_mm_1\otimes m_2$. Note that since $S'\subseteq C$ is pure as left and right $A$-module, we can regard $S'\otimes S'$ as a sub-bimodule of $C$, and so $\Delta(S)\subseteq S'\otimes S'$. Moreover, if $S$ is finitely generated as an $A$-bimodule, then $S'$ can be chosen  finitely generated too, since $\Delta$ is $A$-bilinear: let $S'$ be generated by the $m_1,m_2$'s for $m$ in a system of generators of $S$. We then iterate this construction to get $S^{(n+1)}=(S^{(n)})'$, and $S=\bigcup\limits_nS^{(n)}$. Regard $S\otimes S\subseteq C\otimes C$, and $\Delta(S)\subseteq S\otimes S$, since $\Delta(S^{(n)})\subseteq S^{(n+1)}\otimes S^{(n+1)}$. Using this, Remark \ref{r.cat} and Proposition \ref{p.coring1}, we get the following version of fundamental theorem for corings.

\begin{proposition}
Let $A$ be a von Neumann regular ring, and $C\in Crg_A$. Then every finite subset of $C$ is contained in a subcoring $D$ of $C$ which is a pure left and right $A$-submodule and countably generated as an $A$-bimodule. Hence, $U$ has a right adjoint, and the category $Crg_A$ is complete.
\end{proposition}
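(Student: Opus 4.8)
The plan is to prove the first claim — that every finite subset of $C$ lies in a countably generated subcoring $D$ that is pure on both sides — directly by the iterated-closure construction sketched in the paragraph preceding the statement, and then to deduce the existence of the right adjoint and completeness formally from the earlier results. First I would start with a finite subset of $C$, take $S$ to be the $A$-sub-bimodule it generates (finitely generated), and form the ascending chain $S=S^{(0)}\subseteq S^{(1)}\subseteq\cdots$ with $S^{(n+1)}=(S^{(n)})'$, where the prime denotes the closure described above: the sub-bimodule generated by all the tensorands $m_1,m_2$ appearing in $\Delta(m)=\sum_m m_1\otimes m_2$ for $m$ ranging over a generating set of $S^{(n)}$. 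I would set $D=\bigcup_n S^{(n)}$.

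Next I would verify the three properties of $D$ in turn. Countable generation is immediate: each $S^{(n)}$ is finitely generated (using that $\Delta$ is $A$-bilinear, so closing off a finite generating set requires only finitely many new generators), so their union is countably generated. Purity is where von Neumann regularity does the work: since $A$ is von Neumann regular every module is flat and every monomorphism is pure, so in particular the inclusion $D\hookrightarrow C$ is pure as a map of left and of right $A$-modules. The crucial consequence, already noted in the preamble, is that purity lets us regard $D\otimes_A D$ as a sub-bimodule of $C\otimes_A C$ via the injective induced map; without this one cannot even make sense of the statement $\Delta(D)\subseteq D\otimes_A D$. Then I would check that $D$ is a subcoring, i.e. that $\Delta$ and $\epsilon$ restrict to $D$: comultiplication restricts because $\Delta(S^{(n)})\subseteq S^{(n+1)}\otimes_A S^{(n+1)}\subseteq D\otimes_A D$ by the very definition of the closure, and the counit restricts trivially. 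Coassociativity and counitality for $D$ are inherited from $C$ because, by purity, $D\otimes_A D$ and $D\otimes_A D\otimes_A D$ embed in the corresponding tensor powers of $C$ and the required identities already hold there.

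Finally I would deduce the two corollaries. The collection of all such countably generated subcorings $D$ forms a set (up to isomorphism their underlying bimodules have cardinality bounded in terms of $|A|$ and $\aleph_0$), and the first claim says every coring is the directed union, hence the colimit, of its countably generated subcorings; thus this set is a set of generators for $Crg_A$. By Proposition \ref{p.coring1}, $Crg_A$ is cocomplete and co-wellpowered, so the dual SAFT applies and gives a right adjoint to $U$. Completeness then follows from Remark \ref{r.cat}, whose hypotheses (faithful colimit-preserving $U$, $\Mm={}_A\Mm_A$ complete, $Crg_A$ co-wellpowered with a generating set, shared initial object) are now all verified.

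I expect the main obstacle to be the purity bookkeeping in the subcoring verification: one must be careful that the closure operation really produces a sub-bimodule for which $D\otimes_A D\hookrightarrow C\otimes_A C$ is injective, and that the comultiplication of $C$ genuinely corestricts to $D\otimes_A D$ rather than merely to its image in $C\otimes_A C$. This is precisely the pathology of the tensor product over a general ring that von Neumann regularity is invoked to eliminate, so the argument rests entirely on flatness; the rest of the proof is formal.
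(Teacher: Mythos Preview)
Your proposal is correct and follows essentially the same argument as the paper: the iterated closure $S^{(n+1)}=(S^{(n)})'$ with union $D$, the observation that each step stays finitely generated by $A$-bilinearity of $\Delta$, the use of von Neumann regularity to get purity (hence injectivity of $D\otimes_A D\to C\otimes_A C$) for free, and the appeal to Proposition~\ref{p.coring1} and Remark~\ref{r.cat} for the adjoint and completeness. You have also correctly located the only delicate point---that $\Delta$ must corestrict to $D\otimes_A D$ itself rather than its image in $C\otimes_A C$---and correctly noted that flatness resolves it.
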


By Remark \ref{r.cat}, the cofree coring on an $A$-bimodule $V$ is  $C(V)=\lim\limits_{\stackrel{\longrightarrow}{[f:C\rightarrow V]\in {}_A\Mm_A|{}_AC_A\,{\rm is\,}\aleph_0-{\rm\,generated}}}C.$ 

Therefore, to find generators of $Crg_A$, we look for a type of fundamental theorem of corings, in which we deal with non-flatness in general. Naturally, the idea is to close up a sub-bimodule $S$ of $C\in Crg_A$ not only under $\Delta$ but also under ``purity". 
This is the essential idea of \cite{Barr}, which we briefly recall and note that it adapts straightforward to the most general case. The following theorem is well-known as Cohn's criterion for pure left $A$-modules, which we recall here for convenience \cite{Cohn}.

\begin{theorem} (P.M. Cohn) 
{\it Let $M$ be a left $A$-module, and $\bar{M} \subseteq M $ be a submodule. Then $\bar{M} $ is a pure submodule of $M$ if and only if every finite system of linear equations
\begin{eqnarray}
\sum\limits_{i=1}^{k_j} {\lambda_{ij}x_{i}} & = & \bar{m}_{j}, \hspace{15pt} j = 1,2,\dots,n \label{eq1}
\end{eqnarray}
with $\bar{m}_{j} \in \bar{M} $, $\lambda_{ij} \in A$ has a solution in $\bar{M} $ whenever it has one in $M$.}
\end{theorem}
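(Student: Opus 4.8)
The plan is to reduce the statement, in both directions, to the classical \emph{equational criterion for the vanishing of a tensor}: for a right $A$-module $P$ and a left $A$-module $M$, one has $\sum_{i=1}^{n} x_i \otimes y_i = 0$ in $P \otimes_A M$ if and only if there exist finitely many $z_1, \dots, z_s \in M$ and scalars $a_{ij} \in A$ with
$$y_i = \sum_{j=1}^{s} a_{ij} z_j \quad (1 \le i \le n), \qquad \sum_{i=1}^{n} x_i a_{ij} = 0 \quad (1 \le j \le s).$$
The ``if'' part of this criterion is a one-line computation, and its ``only if'' part is exactly the identification of the kernel of $F \otimes_A M \to P \otimes_A M$ for a free presentation $F$ of $P$; I would either cite it (\cite{Cohn}) or record the short verification. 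The key point is that it translates the condition ``$u = 0$ in a tensor product'' into the solvability of a finite linear system, which is precisely the language of the theorem.

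For the implication ``solvability $\Rightarrow$ pure'', I would fix a right $A$-module $P$ and an element $u = \sum_{i=1}^{n} p_i \otimes \bar m_i \in P \otimes_A \bar M$ (with $\bar m_i \in \bar M$) that dies in $P \otimes_A M$. Applying the vanishing criterion \emph{in} $P \otimes_A M$ produces $z_1,\dots,z_s \in M$ and $a_{ij} \in A$ with $\bar m_i = \sum_j a_{ij} z_j$ and $\sum_i p_i a_{ij} = 0$. Reading $\bar m_i = \sum_j a_{ij} z_j$ as a finite linear system with right-hand sides in $\bar M$, coefficients in $A$, and a solution $(z_j)$ in $M$, the hypothesis supplies a solution $(\bar z_j)$ in $\bar M$. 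Substituting this back and using $\sum_i p_i a_{ij} = 0$ gives $u = \sum_j \big(\sum_i p_i a_{ij}\big) \otimes \bar z_j = 0$ already in $P \otimes_A \bar M$; hence $P \otimes_A \bar M \to P \otimes_A M$ is injective and $\bar M$ is pure.

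For the converse ``pure $\Rightarrow$ solvability'', which I expect to be the main obstacle, I would manufacture a finitely presented right $A$-module out of the coefficient matrix. Given a system $\sum_{i=1}^{k} \lambda_{ij} x_i = \bar m_j$ ($1 \le j \le n$, after padding with zero coefficients so that all unknowns $x_1,\dots,x_k$ occur) with a solution $(a_i) \in M^k$, let $\psi : A^{k} \to A^{n}$ be the morphism of \emph{free right} modules whose matrix is $(\lambda_{ij})$, and set $P = \Coker \psi$. Right-exactness of $- \otimes_A M$ identifies $P \otimes_A M$ with $M^{n}/\im(L_M)$, where $L_M : M^{k} \to M^{n}$ sends $(t_i) \mapsto \big(\sum_i \lambda_{ij} t_i\big)_j$, and likewise $P \otimes_A \bar M = \bar M^{n}/\im(L_{\bar M})$. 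The vector $(\bar m_j)_j \in \bar M^{n}$ equals $L_M((a_i))$, so its class vanishes in $P \otimes_A M$; by purity the natural map $P \otimes_A \bar M \to P \otimes_A M$ is injective, forcing the class of $(\bar m_j)_j$ to vanish already in $P \otimes_A \bar M$. Unwinding this means $(\bar m_j)_j \in \im(L_{\bar M})$, i.e. there are $\bar a_i \in \bar M$ with $\sum_i \lambda_{ij} \bar a_i = \bar m_j$ for all $j$ — a solution of the original system in $\bar M$. The delicate points here are purely bookkeeping: fixing the left/right conventions so that the coefficient matrix really induces $\psi$, checking the commuting square relating the two cokernel presentations, and observing that the distinguished element lives in $\bar M^{n}$ so that chasing it through the square is legitimate.
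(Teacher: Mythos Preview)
Your proof is correct and follows the standard route to Cohn's criterion via the equational description of vanishing in a tensor product together with the finitely presented test module built from the coefficient matrix. However, the paper does not actually prove this theorem: it is stated as a well-known result and simply cited to \cite{Cohn}, so there is no proof in the paper to compare against. Your argument is exactly the kind of verification one would supply if asked to fill in the reference; the only cosmetic point is to keep the indexing consistent with the displayed system~\eqref{eq1} (unknowns indexed by $i$, equations by $j$), which you already handle after the relabeling you mention.
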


\begin{remark}
The following is a well known procedure. Let $N$ be an $A$-sub-bimodule of $M$. Consider all finite systems (\ref{eq1}) for the left $A$-module $N$, and also for the right $A$-module $N$, and let $N'$ be the sub-bimodule of $M$ generated by all these solutions. Iterate this to get a chain $N\subseteq N'\subseteq N''\subseteq \dots N^{(n)}\subseteq \dots$, with union $N^*=\bigcup\limits_{n}N^{(n)}$. By Cohn's test  $N^*$ is a pure left and right sub-bimodule of $M$ containing $N$; moreover $|N^*|\leq \max\{|A|,|N|,\aleph_0\}$ ($|-|$ denotes cardinality).
\end{remark}

Given $A$-bimodules $M,N$ with sub-bimodules $M',N'$ denote $M'\cdot N'$ the image of the canonical map $M'\otimes N'\rightarrow M\otimes N$. If $(C,\Delta,\varepsilon)$ is a coring, we call a sub-bimodule $M$ of $C$ invariant if $\Delta(M)\subseteq M\cdot M$. As before, we note that every sub-bimodule $M$ of $C$ is contained in an invariant one: write $\Delta(m)=\sum\limits_{m}m_1\otimes m_2$ for each $m\in M$, and let $M'$ be the sub-bimodule generated by all $m_1$'s and $m_2$'s; it contains $M$ by the counit property: $m=\sum\limits_{m}\varepsilon(m_1)m_2$. Iterate this to get a sequence $M\subseteq M'\subseteq M''\subseteq \dots$; if $M^\sim$ is the union, it is clear that $M^\sim$ is invariant, and its cardinality again does not exceed $\max\{|A|,|M|,\aleph_0\}$.

Finally, let $S$ be a sub-bimodule of $C$, and iterate alternatively so that $S_0=S$, $S_{2n+1}=S_{2n}^\sim$, $S_{2n}=S_{2n-1}^*$. Let $D=\bigcup\limits_nS_n$. Then it is obvious that $D$ is invariant, and it is a left and right pure $A$-submodule of $C$. 
Also, by purity, the canonical map $D\otimes D\rightarrow D\cdot D\subset C\otimes C$ is injective, which means $\Delta:C\rightarrow C\otimes C$ factors to $\Delta\vert_D:D\rightarrow D\otimes D$, and $D$ becomes a subcoring of $C$. Hence, we can conclude the main result of this section: 

\begin{theorem}
(i) Let $C$ be an $A$-coring. Then every $A$-sub-bimodule $S$ of $C$ is contained in a subcoring $D$ of $C$ which is pure as left and as right $A$-submodule, and such that $|D|\leq \max\{|S|,|A|,\aleph_0\}$.\\
(ii) $Crg_A$ is generated by all corings of cardinality $\leq \max\{|A|,\aleph_0\}$.\\
(iii) There is a cofree coring $C(V)$ on every $A$-bimodule $V$; moreover, using equations \ref{e.co1} and \ref{e.co2}
\begin{eqnarray*}
C(V) & = & \lim\limits_{\stackrel{\longrightarrow}{f: U(G)\rightarrow V|\,G\in Crg_A; \,|G|\leq \{|A|,\aleph_0\}}}G
\end{eqnarray*}\\
(iv) The category $Crg_A$ is complete. If $F:\Aa\rightarrow Crg_A$ a functor from a small category, let $(P,q_a:P\rightarrow C_a)=\lim\limits_{\stackrel{\longleftarrow}{a}}C_a=\lim\limits_{\stackrel{\longleftarrow}{\,}}UF$ be the limit of $A$-bimodules. The limit of $F$ is 
$$C(P)_0=\lim\limits_{\stackrel{\longleftarrow}{[h:H\rightarrow C(P)]\,\,|\,\,h,\,q_afh\in Crg_A}}H$$ 
with canonical maps $(q_afp_0)_a$, where $(C(P)_0,p_0)=(\Hh_0(P,(q_af)_a),p_0)$ is the construction of  \ref{r.cat}.
\end{theorem}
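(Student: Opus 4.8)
The plan is to establish the four parts essentially in sequence, since each builds on the machinery already developed. The heart of the matter is part (i), a ``fundamental theorem for corings'' that simultaneously closes a sub-bimodule under coassociativity and under purity; this is exactly the content of the alternating iteration $S_0=S$, $S_{2n+1}=S_{2n}^\sim$, $S_{2n}=S_{2n-1}^*$ described just before the statement, with $D=\bigcup_n S_n$. I would first verify that $D$ is invariant: since invariance was shown to be preserved under the $(-)^\sim$ closure and the union is directed, any $\Delta(m)$ for $m\in D$ lands in some $S_{2n+1}\cdot S_{2n+1}\subseteq D\cdot D$. Next, using Cohn's criterion, $D$ is pure as a left and right $A$-submodule, being a directed union of the purifying closures $(-)^*$. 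The cardinality bound $|D|\leq\max\{|S|,|A|,\aleph_0\}$ follows because each step of either closure raises cardinality by at most $\max\{|A|,\aleph_0\}$ and there are countably many steps. The crucial payoff is that purity makes $D\otimes D\to D\cdot D\subseteq C\otimes C$ injective, so invariance $\Delta(D)\subseteq D\cdot D$ lets $\Delta$ corestrict to $\Delta|_D:D\to D\otimes D$; together with $\varepsilon|_D$ this makes $D$ a genuine subcoring of $C$.

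With (i) in hand, part (ii) is immediate: given any coring $C$, every element (indeed every finite or arbitrary sub-bimodule) is contained in a subcoring of cardinality $\leq\max\{|A|,\aleph_0\}$, so the set of such small corings is a generating set. One must note this is an honest \emph{set} up to isomorphism, since there are only set-many corings of bounded cardinality. Part (iii) then follows by invoking the SAFT construction recalled in Section~\ref{s.p}: by Proposition~\ref{p.coring1} the category $Crg_A$ is cocomplete and co-wellpowered, and (ii) supplies the generating set, so $U$ has a right adjoint. The explicit formula for the cofree coring $C(V)$ is obtained by substituting this generating set $\Gg=\{G\in Crg_A:|G|\leq\max\{|A|,\aleph_0\}\}$ directly into formula \ref{e.co1}, giving the displayed colimit over all maps $f:U(G)\to V$.

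Finally, part (iv) applies Remark~\ref{r.cat} verbatim with $\Cc=Crg_A$, $\Mm={}_A\Mm_A$, and the forgetful $U$, which preserves colimits by Proposition~\ref{p.coring1} and is faithful; the required initial object is the zero coring, with $U(0)=0$, and ${}_A\Mm_A$ is complete. Given $F:\Aa\to Crg_A$, I would first form $P=\lim_{\longleftarrow}UF$ as a limit of $A$-bimodules with structure maps $q_a$, then feed $(P,(q_af)_a)$ into the construction $(\Hh_0(P,(q_af)_a),p_0)$ of Proposition~\ref{p.cat} to produce $(C(P)_0,p_0)$, and Remark~\ref{r.cat} guarantees $(C(P)_0,(q_afp_0)_a)$ is the limit of $F$ in $Crg_A$.

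I expect the main obstacle to be the interplay between purity and coassociativity in part (i)---specifically, justifying that the directed union $D$ retains \emph{both} properties at once, and that purity of $D$ is precisely what is needed to identify $D\cdot D$ with $D\otimes D$ so that $\Delta$ descends. The alternating nature of the iteration is essential: closing under $\Delta$ can destroy purity and closing under purity can destroy invariance, so one needs the union of the interleaved tower to stabilize both. The remaining parts are then formal consequences of the categorical apparatus already in place.
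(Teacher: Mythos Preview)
Your proposal is correct and follows essentially the same approach as the paper: the alternating iteration $S_{2n+1}=S_{2n}^\sim$, $S_{2n}=S_{2n-1}^*$ with $D=\bigcup_n S_n$ for part (i), the cardinality bound giving the generating set for (ii), and then direct application of the SAFT formulas and Remark~\ref{r.cat} for (iii) and (iv). The paper's argument is exactly the one you outline, including the key observation that purity of $D$ makes $D\otimes D\to D\cdot D$ injective so that $\Delta$ corestricts.
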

We note that (iv) is also the main result of \cite{Porst1}, and (iii) was raised in \cite{Agore}; our emphasis is to answer these questions in a constructive way, and to explicitly produce the cofree coring and limit of corings.
We also note that the system of generators for $Crg_A$ can be chosen better in some situations, as it was the case for von Neumann regular rings.

\begin{proposition}
Let $A$ be a countably generated $\KK$-algebra over a field $\KK$. If $C$ is an $A$-coring, then for every at most countable dimensional $A$-sub-bimodule $M$ of $C$, there is a subcoring $D$ of $C$ such that $D$ is a pure left and right $A$-submodule of $C$, $M\subseteq D$ and $\dim(D)\leq \aleph_0$. In particular, the $A$-corings of at most countable dimension generate $Crg_A$.
\end{proposition}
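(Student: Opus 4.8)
The plan is to adapt the alternating-closure argument used for the general coring case in part (i) of the preceding theorem, but to track $\KK$-dimension rather than cardinality, and to verify that both of the two closure operations—closing under comultiplication and closing under purity—preserve the property of being at most countable dimensional. Concretely, I would start with the at most countable dimensional sub-bimodule $M\subseteq C$ and build the same two-step iteration: the invariant-closure step $S\mapsto S^\sim$ and the pure-closure step $S\mapsto S^*$, then set $D=\bigcup_n S_n$ with $S_0=M$, $S_{2n+1}=S_{2n}^\sim$, $S_{2n}=S_{2n-1}^*$. The work is entirely in the dimension bookkeeping: I must check that each $S_n$ stays at most countable dimensional, and then that a countable union of such spaces is again at most countable dimensional (which it is, since $\aleph_0\cdot\aleph_0=\aleph_0$).

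For the invariant-closure step, the argument is the same as in the theorem: for a basis element $m$ of $S$, writing $\Delta(m)=\sum_m m_1\otimes m_2$ is a finite sum, so adjoining all the $m_1,m_2$'s to a generating set enlarges the $A$-bimodule by only countably many new generators; since $A$ is countably generated as a $\KK$-algebra, an $A$-bimodule generated by countably many elements is at most countable dimensional over $\KK$, so $S^\sim$ is again countable dimensional. For the pure-closure step I would invoke Cohn's criterion exactly as in the earlier remark: $S^*$ is built by adjoining, for each finite system of linear equations over $A$ with right-hand sides in $S$ that is solvable in $C$, a chosen solution. The point I need is that there are only countably many such finite systems—this follows because $A$ is countable dimensional (being a countably generated $\KK$-algebra), so there are countably many possible coefficient matrices $(\lambda_{ij})$, and countably many possible right-hand-side vectors drawn from the countable dimensional $S$. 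Each solution contributes finitely many new generators, so $S^*$ is generated over $A$ by countably many elements, hence again at most countable dimensional.

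The main obstacle—the one step deserving real care—is precisely the cardinality count for the pure-closure step, namely verifying that the collection of finite systems \eqref{eq1} to be solved is countable. This rests on $A$ itself being at most countable dimensional over $\KK$, which is exactly what ``countably generated $\KK$-algebra'' delivers: a countable generating set produces a countable spanning set of monomials. Once that is in hand, the two closure operations each preserve countable dimensionality, the alternating union $D$ is countable dimensional, and by Cohn's test $D$ is pure on both sides while invariance of $D$ makes $\Delta$ factor through $D\otimes D\hookrightarrow C\otimes C$ (the embedding being injective by purity), so $D$ is a subcoring containing $M$ with $\dim_\KK(D)\leq\aleph_0$.

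Finally, the ``in particular'' assertion follows formally: part (i)-style closure shows every finite-dimensional (hence every at most countable dimensional) sub-bimodule sits inside such a $D$, so any coring is the directed union of its at most countable dimensional subcorings, which is exactly the statement that these subcorings form a generating set for $Crg_A$, as in Remark \ref{r.cat} and Proposition \ref{p.coring1}.
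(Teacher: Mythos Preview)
Your overall architecture matches the paper's proof exactly: alternate the invariant-closure $S\mapsto S^\sim$ with the pure-closure $S\mapsto S^*$, track $\KK$-dimension through each step, and take the countable union. The invariant-closure bookkeeping is fine.

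The gap is in your pure-closure count. You write that ``there are only countably many such finite systems'' because $A$ is countable dimensional, ``so there are countably many possible coefficient matrices $(\lambda_{ij})$, and countably many possible right-hand-side vectors drawn from the countable dimensional $S$.'' This conflates countable \emph{dimension} with countable \emph{cardinality}. If $\KK$ is uncountable (e.g.\ $\KK=\RR$ or $\CC$), then $A$ and $S$ are uncountable as sets even when $\dim_\KK A,\dim_\KK S\le\aleph_0$, so there are uncountably many finite systems with arbitrary $\lambda_{ij}\in A$ and $\bar m_j\in S$. Your counting argument therefore fails over any uncountable field.

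The paper repairs exactly this point: rather than ranging over all systems, it observes that at each stage it suffices to adjoin solutions only for those systems whose coefficients $\lambda_{ij}$ lie in a fixed countable $\KK$-basis of $A$ and whose right-hand sides $\bar m_j$ lie in a fixed countable $\KK$-basis of the current $N^{(k)}$. That restricted family \emph{is} countable, so each $N^{(k)}$ stays countable dimensional, and after the countable iteration the union still satisfies Cohn's criterion for the full collection of systems. You need to make this reduction (and justify why it suffices) to close the argument.
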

\begin{proof}
Under the current hypotheses and previous notations, note that if $N\subset C$ with $\dim(N)\leq \aleph_0$, we may choose the submodule $N^*$ such that $\dim(N^*)\leq \aleph_0$. Indeed, one needs to recursively add all solutions to the system of equations \ref{eq1}; but at each step, it is enough to consider all such systems with $m_{ij}$-coefficients in a fixed basis of $N^{(k)}$, and with coefficients $\lambda_{ij}\in A$ coming from from a fixed basis of $A$. Iterating this countably many times, each $N^{(k)}$ has $\dim(N^{(k)})\leq \aleph_0$ and so $\dim(N^*)\leq \aleph_0$. Similarly, given $M$ with $\dim(M)\leq \aleph_0$, for a basis $(m_i)_{i\geq 1}$ choose representations as finite tensors $\Delta(m_i)=\sum\limits_{j} m_{ij} \otimes n_{ij}$ and let $\bar{M}$ be the $A$-sub-bimodule generated by all the $m_i,m_{ij},n_{ij}$; it will be at most countable dimensional since $\dim(A)\leq \aleph_0$. Hence, the $M^\sim$ constructed before can be chosen with $\dim(M^\sim)\leq \aleph_0$, and countable iteration combining the two processes yields a countable dimensional pure subcoring.
\end{proof}

In the above case, we get $C(V)=\lim\limits_{\stackrel{\longrightarrow}{\{D\rightarrow V\}}}D$ the colimit ranging over the category of linear maps $D\rightarrow V$ from at most countable dimensional corings $D$ to $V$. Similarly, one has

\begin{corollary}
If $A$ is a $\KK$ algebra with $\dim(A)=\gamma$ and $C$ is an $A$-coring, then every subset $F$ of $C$ with $|F|\leq \gamma$ is contained in a subcoring $D$ of $C$ which is a pure left and right $A$-submodule of $C$ and such that $\dim(D)\leq \max\{\gamma,\aleph_0\}$.
\end{corollary}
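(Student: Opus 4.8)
The plan is to repeat the construction proving the preceding Proposition almost verbatim, replacing the role of $\aleph_0$ throughout by the cardinal $\kappa:=\max\{\gamma,\aleph_0\}$, and to check only that each closure operation preserves the bound $\dim(\,\cdot\,)\le\kappa$. First I would let $M$ be the $A$-sub-bimodule of $C$ generated by $F$. Since $\dim(A)=\gamma$ and $|F|\le\gamma$, the bimodule $A\cdot F\cdot A$ has dimension at most $\gamma\cdot\gamma\cdot\gamma=\kappa$, so I may assume from the outset that I am closing up a sub-bimodule $M$ with $\dim(M)\le\kappa$.

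Next I would treat the two closure steps separately. For the purity step $N\mapsto N^*$ (adjoining, by Cohn's criterion, all solutions in $C$ of the finite linear systems \ref{eq1} whose data lie in $N$), the key point is a cardinality count: fixing a basis of $A$ of size $\gamma$ and a basis of $N$ of size $\le\kappa$, each admissible system is determined by a finite tuple of coefficients $\lambda_{ij}\in A$ together with a finite tuple of right-hand sides $\bar m_j\in N$, hence by a finite sequence drawn from a set of cardinality $\kappa$. As $\kappa$ is infinite, the set of such finite sequences again has cardinality $\kappa$, and each system contributes only finitely many new elements; therefore a single step $N\mapsto N'$ yields a bimodule with at most $\kappa$ generators, of dimension $\le\kappa\cdot\gamma=\kappa$. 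A countable iteration then keeps $\dim(N^*)\le\kappa$, since a countable union of sets of size $\le\kappa$ has size $\le\kappa$.

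The invariant step $N\mapsto N^\sim$ is handled identically: for each element $m$ of a basis of $N$ I fix a finite expression $\Delta(m)=\sum_j m_j\otimes n_j$ and adjoin all the $m_j,n_j$; this adds at most $\kappa$ generators, so the resulting bimodule again has dimension $\le\kappa$, and the bound persists through the iteration. Finally I would alternate the two processes exactly as in the main construction, setting $S_0=M$, $S_{2n+1}=S_{2n}^\sim$, $S_{2n}=S_{2n-1}^*$, and $D=\bigcup_n S_n$. By construction $D$ is invariant and pure as a left and as a right $A$-submodule, hence a subcoring of $C$ containing $F$, and as a countable union of bimodules of dimension $\le\kappa$ it satisfies $\dim(D)\le\kappa=\max\{\gamma,\aleph_0\}$.

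The only genuinely new content beyond the previous Proposition is this cardinal bookkeeping, so the main (and rather mild) obstacle is simply to verify that the count of Cohn systems and of $\Delta$-generators stays within $\max\{\gamma,\aleph_0\}$; once the identities $\gamma\cdot\gamma=\gamma$ and $\kappa\cdot\gamma=\kappa$ for infinite cardinals are in place, the entire argument transports without further change.
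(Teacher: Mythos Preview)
Your proposal is correct and is exactly the argument the paper intends: the paper gives no separate proof for this Corollary, merely writing ``Similarly, one has'' after the preceding Proposition, and your write-up supplies precisely the cardinal bookkeeping (replacing $\aleph_0$ by $\kappa=\max\{\gamma,\aleph_0\}$ in the purity and invariance closures) that this remark points to.
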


In the case of $CoAlg_\KK$ = coalgebras over a field $\KK$, we note that the generators are much more precise. Of course, the finite dimensional coalgebras generate $CoAlg_\KK$. This explains one of the classical constructions of the cofree coalgebra $C(V)$ over a vector space $V$ (see \cite[Chapter 1]{Dascalescu}) in a new perspective. For example, if $V$ is finite dimensional, $C(V)=T(V^*)^0$, where $A^0$ denotes the finite dual coalgebra of the algebra $A$; one has $A^0=\lim\limits_{I{\rm\,cofinite\,ideal}}(A/I)^*$. But for a finite dimensional algebra $B$, $\Hom_{Alg}(T(V^*),B)=\Hom_\KK(V^*,B)=\Hom_\KK(B^*,V)$.
So, the colimit building $T(V^*)^0$ is thus the colimit of $D$ over linear maps $D\rightarrow V$ for finite dimensional coalgebras $D$. Recall that if $n\geq 1$, the coalgebra $M_n^c(\KK)=(M_n(\KK))^*$ is called the comatrix coalgebra over $\KK$.

\begin{proposition}
The set $(M_n^c(\KK))_n$ generates the category $CoAlg_\KK$of $\KK$-coalgebras over a field $\KK$.
\end{proposition}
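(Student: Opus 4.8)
The plan is to reduce to the finite-dimensional case via the fundamental theorem of coalgebras, and then to produce, for each finite-dimensional coalgebra $D$, a single surjective coalgebra morphism from a comatrix coalgebra onto $D$, using finite-dimensional duality together with the left regular representation.

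First I would recall that over a field every coalgebra $C$ is the directed union of its finite-dimensional subcoalgebras, so that the finite-dimensional coalgebras already form a generating set, as noted just above the statement. Consequently, to prove that $(M_n^c(\KK))_n$ generates $CoAlg_\KK$ it suffices to show that every finite-dimensional subcoalgebra of any $C$ is covered by a comatrix coalgebra, i.e.\ that there is a coalgebra morphism $M_n^c(\KK)\to C$ whose image contains it. Indeed, if $f\neq g\colon C\to D$ are parallel coalgebra morphisms differing at some $c\in C$, then $c$ lies in a finite-dimensional subcoalgebra $E$; a comatrix coalgebra maps onto $E$, and precomposing the inclusion $E\hookrightarrow C$ with this surjection yields $h\colon M_n^c(\KK)\to C$ hitting $c$, whence $fh\neq gh$. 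This is exactly the generating-set condition.

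The heart of the argument is therefore the claim that every finite-dimensional coalgebra $D$ is an epimorphic image of a comatrix coalgebra. Here I would invoke the contravariant duality between finite-dimensional coalgebras and finite-dimensional algebras, $C\mapsto C^*$, under which $(M_n(\KK))^*=M_n^c(\KK)$ and, since $M_n(\KK)$ is finite-dimensional, $(M_n^c(\KK))^*\cong M_n(\KK)$. A coalgebra morphism $M_n^c(\KK)\to D$ is then the same datum as an algebra morphism $D^*\to M_n(\KK)$, with surjections of coalgebras corresponding to injections of algebras. Now $B:=D^*$ is a finite-dimensional unital algebra with $\dim B=n$, where $n=\dim D$, and its left regular representation $b\mapsto(x\mapsto bx)$ is an injective unital algebra homomorphism $B\hookrightarrow\End_\KK(B)\cong M_n(\KK)$, injectivity being immediate upon evaluating at $1$. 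Dualizing this embedding produces a surjective coalgebra morphism $M_n^c(\KK)=M_n(\KK)^*\twoheadrightarrow B^*\cong D$, as needed; in fact this shows the stronger statement that each finite-dimensional coalgebra is a quotient of a single comatrix coalgebra.

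Finally I would assemble the pieces: combining this covering statement with the fundamental theorem shows that for any coalgebra $C$ the images of all coalgebra maps $M_n^c(\KK)\to C$ exhaust $C$, so $(M_n^c(\KK))_n$ is a generating set. The only delicate point is the duality bookkeeping, namely verifying that the functor $(-)^*$ genuinely exchanges coalgebra and algebra morphisms on finite-dimensional objects and turns the injection $B\hookrightarrow M_n(\KK)$ into an epimorphism of coalgebras; but this is standard (see \cite[Chapter 1]{Dascalescu}), and the regular-representation embedding itself is elementary. I expect no real obstacle beyond keeping every arrow pointing in the correct direction.
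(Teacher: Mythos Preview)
Your proposal is correct and follows essentially the same route as the paper: the paper's one-line proof reads ``Obvious, as any finite dimensional coalgebra has a comatrix basis, i.e.\ it is a quotient of some $M_n^c(\KK)$ (since finite dimensional algebras embed in matrix algebras),'' and you have simply unpacked this by making the embedding explicit (the left regular representation) and spelling out the finite-dimensional duality that turns an algebra injection $D^*\hookrightarrow M_n(\KK)$ into a coalgebra surjection $M_n^c(\KK)\twoheadrightarrow D$.
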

\begin{proof}
Obvious, as any finite dimensional coalgebra has a comatrix basis, i.e. it is a quotient of some $M_n^c(\KK)$ (since finite dimensional algebras embed in matrix algebras).
\end{proof}

\section{Cofree coalgebras in monoidal categories of modules and comodules over bialgebras and Hopf algebras}\label{s.3}

As an application of the above ideas, we also look at the case of coalgebras in the category $\Mm^H$ of right $H$-comodules and ${}_H\Mm$ of left $H$-modules, where $H$ is a bialgebra (or a Hopf algebra). We refer the reader to \cite{Dascalescu} and \cite{Radford} for basic Hopf algebra definitions and notations. Let $CoAlg(\Mm^H)$ and $CoAlg({}_H\Mm)$ be the category of coalgebras in $\Mm^H$ and ${}_H\Mm$, respectively. 
Recall that $\Mm^H$ and ${}_H\Mm$ are monoidal categories; moreover, the objects of the category $CoAlg(\Mm^H)$ are precisely the $H$-comodule coalgebras, and the objects of $CoAlg({}_H\Mm)$ are the $H$-module coalgebras (see \cite[Chapter 6]{Dascalescu}).
%
%
Using again Remark \ref{r.cat} and Proposition \ref{p.cowp} and the remarks in section \ref{s.p} we obtain:

\begin{proposition}
The categories of coalgebras $CoAlg(\Mm^H)$ and $CoAlg({}_H\Mm)$ are cocomplete, co-wellpowered, and the forgetful functors $F^H:CoAlg(\Mm^H)\longrightarrow \Mm^H$ and $F_H:CoAlg({}_H\Mm)\longrightarrow{}_H\Mm$ preserve colimits.
\end{proposition}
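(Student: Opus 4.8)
The plan is to reduce the entire statement to Proposition \ref{p.cowp}, whose two parts deliver, for the comonoids in any monoidal category, exactly cocompleteness, colimit-preservation of the forgetful functor (part (i)), and co-wellpoweredness (part (ii)), provided the underlying monoidal category is itself cocomplete and co-wellpowered. Since $\Mm^H$ and ${}_H\Mm$ are abelian, we have $CoAlg(\Mm^H)=CoMon(\Mm^H)$ and $CoAlg({}_H\Mm)=CoMon({}_H\Mm)$, so it suffices to verify that the two base categories meet the hypotheses of Proposition \ref{p.cowp}.

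First I would recall the relevant structures. On ${}_H\Mm$ the monoidal product of left $H$-modules $M,N$ is the $\KK$-vector space $M\otimes_\KK N$ with diagonal action $h\cdot(m\otimes n)=\sum h_1 m\otimes h_2 n$ coming from the comultiplication of the bialgebra $H$, and unit object $\KK$ with action via the counit; dually, on $\Mm^H$ the product of right $H$-comodules carries the codiagonal coaction built from the multiplication of $H$, again with unit $\KK$. In both cases the monoidal product is simply the tensor product of underlying vector spaces over $\KK$.

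Next I would check the categorical hypotheses. As $H$ is a $\KK$-algebra, ${}_H\Mm$ is the module category of a ring, hence a Grothendieck category: it is abelian and cocomplete, with colimits computed on underlying vector spaces, and it is well-powered, whence co-wellpowered (in an abelian category the quotient objects of a fixed $C$ correspond bijectively, via kernels, to its subobjects, which form a set). Forgetting its algebra structure, $\Mm^H$ is the category of comodules over the coalgebra $H$, which is likewise a Grothendieck category with the same properties, colimits again being formed on underlying vector spaces with the induced coaction. The one point deserving care is that Proposition \ref{p.cowp}(i) is really proved under the $\Cc={}_A\Mm_A$--type behavior in which the tensor product is cocontinuous; this is satisfied here because $\otimes_\KK$ is exact and preserves all colimits in each variable, tensoring over a field being both, so the comultiplications and counits transport along colimits and the forgetful functors create them.

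Finally I would invoke Proposition \ref{p.cowp}: part (i), applied to $\Cc=\Mm^H$ and $\Cc={}_H\Mm$, gives that $CoAlg(\Mm^H)$ and $CoAlg({}_H\Mm)$ are cocomplete with $F^H$ and $F_H$ preserving colimits, while part (ii), using the co-wellpoweredness of $\Mm^H$ and ${}_H\Mm$ established above, gives that both categories of coalgebras are co-wellpowered. I do not anticipate a genuine obstacle: the content is the assembly of standard facts about Grothendieck categories of modules and comodules together with the already-proved Proposition \ref{p.cowp}, the only substantive verification being the cocontinuity of $\otimes_\KK$, which is immediate over a field.
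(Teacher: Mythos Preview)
Your proposal is correct and follows exactly the paper's approach: the paper simply invokes Proposition~\ref{p.cowp} (together with the remarks of Section~\ref{s.p}), and you spell out the routine verification that $\Mm^H$ and ${}_H\Mm$ are cocomplete, co-wellpowered monoidal categories with cocontinuous tensor product, which is precisely what is needed to apply that proposition. Your added care about the implicit cocontinuity hypothesis in Proposition~\ref{p.cowp}(i) is appropriate, since the paper's statement suppresses it but the proof by analogy with bimodules relies on it.
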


We explicitly write generators for these two categories. We will have somewhat different situations. 

\subsection*{Module coalgebras}

\begin{proposition}
Let $C$ be a left $H$-module coalgebra, and $F$ a finite subset of $C$. Then $F$ is contained in a submodule subcoalgebra $D$ of $C$ (i.e. $D$ is a $H$-submodule and subcoalgebra of $C$) which is finitely generated as a left $H$-module .
\end{proposition}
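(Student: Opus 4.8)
## Proof Plan

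The plan is to mimic the ``closing up'' strategy used for corings in Section \ref{s.1}, but adapted to the fact that here we work with $H$-modules rather than $A$-bimodules. The essential observation is that the underlying coalgebra structure must be closed under the comultiplication $\Delta$, while the module structure must be closed under the $H$-action, and these two closures must be compatible and simultaneously controllable in size. Since we only need \emph{finite generation} as an $H$-module (not purity), the argument should in fact be simpler than the coring case: over a field $\KK$, the tensor product behaves well and no purity corrections are needed.

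First I would start with the finite subset $F$ and let $M_0$ be the $H$-submodule of $C$ generated by $F$, which is finitely generated as a left $H$-module by construction. The key closure step is to show that any finitely generated $H$-submodule $M$ is contained in a finitely generated $H$-submodule $M'$ that is ``invariant'' in the sense that $\Delta(M')\subseteq M'\otimes M'$ (viewing $\otimes=\otimes_\KK$, so $M'\otimes M'\subseteq C\otimes C$ canonically, since over a field submodules inject into tensor products). To do this, for each generator $m$ of $M$ write $\Delta(m)=\sum_m m_1\otimes m_2$ as a finite sum, and adjoin the $H$-submodule generated by all the $m_1$'s and $m_2$'s. The crucial point is that $\Delta$ is a morphism of $H$-modules (this is precisely what it means for $C$ to be an $H$-\emph{module coalgebra}): the $H$-action on $C\otimes C$ is the tensor/diagonal action, and compatibility guarantees that $\Delta(h\cdot m)=h\cdot\Delta(m)$, so closing the finitely many generators under the components of $\Delta$ automatically controls $\Delta$ on the entire $H$-submodule they generate. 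Since we add only finitely many new generators at each stage, $M'$ remains finitely generated over $H$.

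Next I would iterate this construction to obtain an ascending chain $M_0\subseteq M_1\subseteq M_2\subseteq\cdots$ with each $M_{n+1}=M_n'$ and set $D=\bigcup_n M_n$. Then $D$ is an $H$-submodule, and since $\Delta(M_n)\subseteq M_n'\otimes M_n'=M_{n+1}\otimes M_{n+1}\subseteq D\otimes D$, we get $\Delta(D)\subseteq D\otimes D$, so $\Delta$ restricts to a comultiplication on $D$; the counit restricts trivially. Hence $D$ is a submodule subcoalgebra containing $F$.

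The main obstacle — and the reason the statement requires genuine argument rather than the verbatim coring proof — is ensuring that $D$ is \emph{finitely generated} as an $H$-module, not merely that it is a subcoalgebra. The coring argument produced a countable union and only bounded the \emph{cardinality} of $D$, which is exactly what one should expect if the naive iteration added new generators at every stage. The hard part is therefore to argue that the iteration stabilizes, i.e. that finitely many rounds suffice. I expect the resolution to come from a finiteness feature of the coalgebra itself: although $C$ may be infinite-dimensional as a vector space, the \emph{fundamental theorem of coalgebras} guarantees that each element of $C$ lies in a finite-dimensional subcoalgebra, so the $\KK$-span of $F$ generates a finite-dimensional subcoalgebra $E\subseteq C$ as coalgebras over $\KK$. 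Taking $D$ to be the $H$-submodule generated by $E$ then gives a single finitely generated $H$-module, and one checks that $\Delta(D)\subseteq D\otimes D$ directly from $\Delta(E)\subseteq E\otimes E\subseteq D\otimes D$ together with $H$-linearity of $\Delta$ — bypassing the iteration entirely. This is the cleaner route, and I would pursue it as the primary line of attack, using the iterative closure only as a fallback if the module-coalgebra compatibility needs to be spelled out.
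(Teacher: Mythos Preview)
Your ``cleaner route'' at the end is exactly the paper's proof: take the finite-dimensional subcoalgebra $E$ generated by $F$ via the fundamental theorem of coalgebras, set $D=H\cdot E$, and use the module-coalgebra compatibility $\Delta(h\cdot c)=\sum h_1\cdot c_{(1)}\otimes h_2\cdot c_{(2)}$ to see $\Delta(D)\subseteq D\otimes D$. The iterative closure in your first half is a detour you correctly diagnose as insufficient (it only gives countable generation), so you can simply drop it and lead with the direct argument.
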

\begin{proof}
Let $E\subseteq C$ be the subcoalgebra generated by $F$; it is finite dimensional by the fundamental theorem of coalgebras. Let $D=HE$ be the left $H$-submodule of $C$ generated by $E$. Write $\Delta_H(h)=\sum h_1\otimes h_2$ and $\Delta_C(c)=\sum c_{(1)}\otimes c_{(2)}$. If $c\in E, h\in H$, then $\sum(h\cdot c)_{(1)}\otimes (h\cdot c)_{(2)}=h_1c_{(1)}\otimes h_2c_{(2)}\in H\cdot E\otimes H\cdot E$, 
so $H\cdot E$ is a $H$-submodule subcoalgebra of $C$. 
\end{proof}

\begin{corollary}
The left $H$-module coalgebras $f.g.CoAlg({}_H\Mm)$ which are finitely generated as left $H$-modules form a system of generators for $CoAlg({}_H\Mm)$. Consequently, the functor $F_H:CoAlg({}_H\Mm)\rightarrow {}_H\Mm$ has a right adjoint $G_H$ given by 
$$G_H(V)=\lim\limits_{\stackrel{\longrightarrow}{[f:D\rightarrow V]\in{}_H\Mm,\,D\in CoAlg({}_H\Mm)},\,{}_HD-f.g.}D.$$ 
The limit of a system of $H$-module coalgebras (functor on a small category) $F:\Aa\rightarrow CoAlg({}_H\Mm)$ is $(\Hh_0(\lim\limits_{\stackrel{\longleftarrow}{\Aa}}F(a),(q_a)_a),p_0)$, where $q_a$ are the canonical maps of the limit of $H$-modules $\lim\limits_{\stackrel{\longleftarrow}{\Aa}}F(a)=\lim\limits_{\stackrel{\longleftarrow}{\Aa}}F_H\circ F(a)$.
\end{corollary}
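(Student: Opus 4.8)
The plan is to deduce all three assertions from the preceding Proposition together with the general machinery of Section \ref{s.p}. The statement really consists of three claims: that the finitely generated $H$-module coalgebras form a generating set, that $F_H$ therefore admits a right adjoint with the displayed colimit formula, and that limits are computed via the construction of Remark \ref{r.cat}. Only the first of these requires genuine work; once the generating property is in hand, the other two are formal, since the hypotheses needed to invoke SAFT and Remark \ref{r.cat} have already been verified for $\Cc = CoAlg({}_H\Mm)$ and $U = F_H$.

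First I would establish the generating property. Given an $H$-module coalgebra $C$, consider the collection $\Dd$ of all submodule subcoalgebras $D \subseteq C$ that are finitely generated as left $H$-modules. By the preceding Proposition every finite (indeed every singleton) subset of $C$ lies in some member of $\Dd$, so $\bigcup_{D\in\Dd} D = C$. The family $\Dd$ is directed under inclusion: given $D_1,D_2\in\Dd$, a finite set of module generators of $D_1$ together with one of $D_2$ is a finite subset of $C$, hence by the Proposition lies in some $D\in\Dd$, and then $D\supseteq D_1+D_2$. Thus $C$ is the filtered union of the directed system $\Dd$. Since $F_H$ preserves colimits and filtered colimits in ${}_H\Mm$ are directed unions, this gives $C=\varinjlim_{D\in\Dd} D$ in $CoAlg({}_H\Mm)$. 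To match the canonical diagram used in Section \ref{s.p}, I would observe that the inclusions $D\hookrightarrow C$ are cofinal among all morphisms $p:G\to C$ with $G$ a finitely generated module coalgebra: any such $p$ factors as $G\to \im(p)\hookrightarrow C$, where $\im(p)$ is again a finitely generated submodule subcoalgebra, hence lies in $\Dd$. Therefore $C=\varinjlim_{p:G\to C} G$ over the full diagram, which is exactly the generating condition required. I would also record the set-theoretic point needed for SAFT: a finitely generated left $H$-module has cardinality at most $\max\{|H|,\aleph_0\}$, so up to isomorphism there is only a set of finitely generated $H$-module coalgebras.

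With the generating set in place, the right adjoint follows from SAFT: $\Cc=CoAlg({}_H\Mm)$ is cocomplete and co-wellpowered (Proposition \ref{p.cowp}) and $F_H$ preserves colimits, so it has a right adjoint $G_H$, and the displayed formula is precisely the instance of equation \ref{e.co1} with $\Gg$ the finitely generated module coalgebras and $M=V$. For the limit statement I would apply Remark \ref{r.cat} with $\Mm={}_H\Mm$ (complete and cocomplete), $\Cc=CoAlg({}_H\Mm)$, and $U=F_H$, after checking its hypotheses: $F_H$ is faithful and colimit-preserving, and the initial object of $CoAlg({}_H\Mm)$ is the zero coalgebra, whose underlying module is $0$, matching the initial object of ${}_H\Mm$. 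Remark \ref{r.cat} then yields that the limit of $F:\Aa\to CoAlg({}_H\Mm)$ is $(\Hh_0(P,(q_a)_a),p_0)$ with $P=\varprojlim_\Aa F_H\circ F(a)$ and $q_a$ the structure maps of this module limit, which is the asserted formula.

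The substantive point, the existence of a finitely generated submodule subcoalgebra through any finite set, is exactly what the preceding Proposition supplies; its proof rests on the fundamental theorem of coalgebras. Consequently the only obstacle remaining here is organizational: verifying that the colimit producing $C$ is computed on underlying modules, so that it genuinely recovers $C$ rather than a larger object. This reduces to the fact that images of module-coalgebra maps are finitely generated submodule subcoalgebras, that $F_H$ creates the relevant filtered colimits, and that $F_H$ reflects isomorphisms (a module-coalgebra map that is a module isomorphism has an inverse that is again comultiplicative and counital, by the diagram chase of Proposition \ref{p.cowp}); none of these presents a difficulty once stated.
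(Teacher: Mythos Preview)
Your proposal is correct and follows exactly the approach the paper intends: the paper states this as a Corollary with no explicit proof, treating it as an immediate consequence of the preceding Proposition together with the general machinery of Section~\ref{s.p} (Proposition~\ref{p.cowp}, equations~\ref{e.co1}--\ref{e.co2}, and Remark~\ref{r.cat}). Your writeup simply makes explicit the details the paper leaves to the reader---the directedness of the family of finitely generated submodule subcoalgebras, the cofinality and set-size checks, and the observation that $F_H$ reflects isomorphisms---and these are all straightforward and handled correctly.
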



\subsection*{Comodule coalgebras}

We can prove a stronger statement for this case. Recall that for an algebra $A$, the finite dual (or representative) coalgebra of $A$ is $A^0=\{f\in A^* \vert \exists\,n,\, g_i,h_i\in A^*,\,i=1,\dots,n\,\,{\rm such\,that\,} f(ab)=\sum\limits_{i=1}^ng_i(a)h_i(b),\,\forall\, a,b\in A\}$. We refer to \cite{Dascalescu} for equivalent characterizations. 
We note that while in general $A^0\neq A^*$ unless $\dim(A)<\infty$, in fact every $f\in A^*$ is ``locally representative" in the sense of the following Lemma, which is standard linear algebra.


\begin{lemma}
Let $A$ be a $\KK$-algebra and $f\in A^*$; then for every finite dimensional $V\subseteq A$, there are families $g_i,h_i,\,i=1,\dots,n$ for which $f(ab)=\sum\limits_{i=1}^ng_i(a)h_i(b)$ for all $a,b\in V$.
\end{lemma}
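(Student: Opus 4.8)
The plan is to observe that the expression $f(ab)$ is just a bilinear form on the finite-dimensional space $V$, and that any bilinear form on a finite-dimensional vector space decomposes as a finite sum of ``rank one'' forms $g\otimes h$, i.e. of maps $(a,b)\mapsto g(a)h(b)$. Concretely, I would define $\beta\colon V\times V\to \KK$ by $\beta(a,b)=f(ab)$, where the product $ab$ is formed in $A$ and then evaluated by $f$; this makes sense even though $ab$ need not lie in $V$, and no closure of $V$ under multiplication is needed. Since multiplication in $A$ is $\KK$-bilinear and $f$ is linear, $\beta$ is a bilinear form on $V$.

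Next I would fix a basis $e_1,\dots,e_m$ of $V$ with dual basis $e_1^*,\dots,e_m^*\in V^*$. Writing $a=\sum_i e_i^*(a)\,e_i$ and $b=\sum_j e_j^*(b)\,e_j$ and using bilinearity of $\beta$, one gets
\[
f(ab)=\beta(a,b)=\sum_{i,j}\beta(e_i,e_j)\,e_i^*(a)\,e_j^*(b)
\]
for all $a,b\in V$. This is already a representation of the required shape: reindexing the pairs $(i,j)$ by a single index and absorbing the scalar $\beta(e_i,e_j)$ into the first factor yields families $g_i,h_i$ (with $n=m^2$ terms, or fewer after trimming zero rows/columns) such that $f(ab)=\sum_i g_i(a)h_i(b)$ on $V$.

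Finally, to land the $g_i,h_i$ in $A^*$ rather than merely $V^*$ (matching the context of the finite dual $A^0$), I would extend each $e_i^*$ to a linear functional on all of $A$, for instance by choosing a linear complement of $V$ in $A$ and extending by zero. I expect no real obstacle in this argument: the entire content is the finite-dimensionality of $V$, which forces the space of bilinear forms on $V$ to equal $V^*\otimes V^*$ and hence to consist of finite sums of decomposable tensors. The only point requiring a word of care is that $f$ is applied to the product $ab\in A$, so the algebra structure of $A$ enters, but $V$ itself is used only as a finite-dimensional subspace.
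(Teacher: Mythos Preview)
Your argument is correct and is essentially the same as the paper's: the paper phrases the key step abstractly as the isomorphism $(V\otimes V)^*\cong V^*\otimes V^*$ for finite-dimensional $V$ and then pushes $f$ through $A^*\xrightarrow{m^*}(A\otimes A)^*\xrightarrow{(i\otimes i)^*}(V\otimes V)^*$, whereas you unpack this isomorphism explicitly via a basis and dual basis. Your additional remark about extending the $e_i^*$ from $V^*$ to $A^*$ is a point the paper leaves implicit but which is indeed needed for the subsequent application.
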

\begin{proof}
$(V\otimes V)^*\cong V^*\otimes V^*$ if $\dim(V)<\infty$, so every linear $\varphi:V\otimes V\rightarrow \KK$ is of the form $\varphi(a\otimes b)=\sum\limits_ig_i(a)h_i(b)$; if $i:V\hookrightarrow A$ is the inclusion and $m:A\otimes A\rightarrow A$ the multiplication, the statement follows by using the map $A^*\stackrel{m^*}{\longrightarrow}(A\otimes A)^*\stackrel{(i\otimes i)^*}{\longrightarrow}(V\otimes V)^*\cong V^*\otimes V^*$ and noting that $f\vert_{V\otimes V}=(i\otimes i)^*m^*(f)$ is in $V^*\otimes V^*$ of the desired form.
\end{proof}

The following finiteness theorem may be known, but we were unable to locate a reference. Its proof is  straightforward, and we sketch it for completeness. We recall that if $C$ is a coalgebra and $(M,\rho:M\rightarrow M\otimes C)$ is a finite dimensional right $C$-comodule, then there is a finite dimensional subcoalgebra $D$ of $C$ such that $\rho(M)\subseteq M\otimes D$. The smallest such coalgebra $cf_C(M)$ is unique and is called the coalgebra of coefficients of $M$ (see \cite[Chapter 1]{Dascalescu}). 

\begin{theorem}[Finiteness theorem for comodule coalgebras]
Let $H$ be a bialgebra and $C$ be a right $H$-comodule coalgebra. Then every finite subset $F$ of $C$ is contained in a finite dimensional subcomodule subcoalgebra $D$ of $C$.
\end{theorem}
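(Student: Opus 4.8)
The plan is to build $D$ in a single pass: close $F$ first under the coalgebra structure, then under the $H$-coaction, and show that the comodule-coalgebra compatibility forces the second closure to preserve the first, so no iteration is needed. The asymmetry of the two closures is what makes this work.

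First I would let $E\subseteq C$ be the subcoalgebra generated by $F$; by the fundamental theorem of coalgebras $E$ is finite dimensional and $\Delta(E)\subseteq E\otimes E$. Next I would let $D$ be the $H$-subcomodule of $C$ generated by $E$, i.e. the span of the first legs $(\mathrm{id}\otimes\phi)\rho(e)$ for $e\in E$, $\phi\in H^*$. Since every finite dimensional subspace of an $H$-comodule lies in a finite dimensional subcomodule (the fundamental theorem of comodules, see \cite{Dascalescu}), $D$ is finite dimensional, and by construction $\rho(D)\subseteq D\otimes H$ and $F\subseteq E\subseteq D$.

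The crux is to check that this $D$ is automatically a subcoalgebra. Writing $\Delta(c)=\sum c_1\otimes c_2$ and $\rho(c)=\sum c^{[0]}\otimes c^{[1]}$, the statement that $C$ is a comodule coalgebra is exactly that $\Delta$ is $H$-colinear, that is
\[
\sum (c_1)^{[0]}\otimes (c_2)^{[0]}\otimes (c_1)^{[1]}(c_2)^{[1]}=\sum (c^{[0]})_1\otimes (c^{[0]})_2\otimes c^{[1]}.\qquad(\star)
\]
For a spanning element $x=(\mathrm{id}\otimes\phi)\rho(e)$ of $D$ with $e\in E$, applying $\mathrm{id}\otimes\mathrm{id}\otimes\phi$ to $(\star)$ gives $\Delta(x)=\sum \phi\big((e_1)^{[1]}(e_2)^{[1]}\big)\,(e_1)^{[0]}\otimes(e_2)^{[0]}$. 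I would then observe that this expression is precisely what one obtains from $(\rho\otimes\rho)\Delta(e)\in C\otimes H\otimes C\otimes H$ by contracting the two $H$-legs with the functional $a\otimes b\mapsto\phi(ab)$ and leaving the two $C$-legs untouched. Because $E$ is a subcoalgebra we have $e_1,e_2\in E$, so both $C$-legs $(e_1)^{[0]},(e_2)^{[0]}$ lie in $D$; hence $\Delta(x)\in D\otimes D$. As such $x$ span $D$, we conclude $\Delta(D)\subseteq D\otimes D$, and compatibility with $\varepsilon$ is automatic, so $D$ is the desired finite dimensional subcomodule subcoalgebra containing $F$.

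I expect the only genuine obstacle to be this last lemma, and within it the one point needing care is that contracting the $H$-legs of $(\rho\otimes\rho)\Delta(e)$ returns an element of $D\otimes D$ rather than merely $C\otimes C$; everything else reduces to the two standard finiteness theorems. I would also remark that the reverse order of closure does \emph{not} obviously terminate, since the subcoalgebra generated by an $H$-subcomodule need not be $H$-costable (the product $(c_1)^{[1]}(c_2)^{[1]}$ in $H$ entangles the two tensor legs), and it is exactly the one-sided use of $(\star)$ above that avoids any iteration.
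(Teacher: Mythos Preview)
Your proof is correct and follows exactly the paper's strategy: take $E$ to be the finite dimensional subcoalgebra generated by $F$, set $D=H^*\cdot E$ the $H$-subcomodule it generates, and use the comodule-coalgebra compatibility $(\star)$ to check $\Delta(D)\subseteq D\otimes D$. The only difference is in the final step: the paper first proves an auxiliary ``local representativity'' lemma, decomposing $\phi\circ m$ on a finite dimensional $V\otimes V\subseteq H\otimes H$ as $\sum_i\beta_i\otimes\gamma_i$ so as to write $\Delta(\phi\cdot e)=\sum_i\beta_i\cdot e_{(1)}\otimes\gamma_i\cdot e_{(2)}$, whereas you bypass this lemma by observing directly that $(\rho\otimes\rho)\Delta(e)\in D\otimes H\otimes D\otimes H$ and that contracting the two $H$-legs with the linear functional $a\otimes b\mapsto\phi(ab)$ is a linear map into $D\otimes D$. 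Your route is marginally cleaner; the paper's has the advantage of exhibiting $\Delta(x)$ explicitly as a sum of $H^*$-translates of elements of $E\otimes E$.
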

\begin{proof}
This can be proved by applying the finiteness theorem of comodules over coalgebras, using the crossed coproduct coalgebra $H\mathrel\triangleright\joinrel\mathrel< C$ (see \cite{Molnar}) and regarding an $H$-subcomodule subcoalgebra $D$ of $C$ as a subcomodule of $C$ over the coalgebra $(H\mathrel\triangleright\joinrel\mathrel< C)\otimes C^{cop}$. However, a direct proof can be done as well. 
Let $E$ be a finite dimensional subcoalgebra of $C$ with $F\subseteq E$. Then $D=H^*\cdot E$ is the right $H$-subcomodule of $C$ generated by $E$ and $\dim(D)<\infty$. It is enough to show that $D$ is a subcoalgebra of $C$. Let $V=cf_H(D)\subseteq H$ ($H$-coefficients of $D$), so $\dim(V)<\infty$ as $\dim(D)<\infty$. For $\alpha\in H^*$, let $\beta_i,\gamma_i,\,i=1,\dots,n\in H^*$ be such that $\alpha(ab)=\beta_i(a)\gamma_i(b)$ for $a,b\in V$. Denote $\rho_C(c)= c_0\otimes c_1$ the $H$-comodule structure of $C$ and $\Delta_H(h)= h_1\otimes h_2$, $\Delta_C(c)=c_{(1)}\otimes c_{(2)}$ as before with omitted summation symbol. Then for $d\in E$
\begin{eqnarray*}
(\alpha\cdot d)_{(1)}\otimes (\alpha\cdot d)_{(2)} & = & (\alpha(d_1)d_0)_{(1)} \otimes (\alpha(d_1)d_0)_{(2)} = \alpha(d_1)d_{0(1)}\otimes d_{0(2)} \\
& = & \alpha(d_{(1)1}d_{(2)1})d_{(1)0}\otimes d_{(2)0} \,\,\,\,\,\,\, {\rm (comodule\,\,coalgebra\,\,axiom)}\\
& = & \sum\limits_i \beta_i(d_{(1)1})\gamma_i(d_{(2)1})d_{(1)0}\otimes d_{(2)0} \\
&  & \,\,\,\,\,\,\, {\rm \,since\,\,the\,\,\,} d_{(1)1},d_{(1)2} {\rm's\,\,\,are\,\,in\,\,}V=cf_H(D)\\
& = & \sum\limits_i\beta_i\cdot d_{(1)}\otimes \gamma_i\cdot d_{(2)}\in H^*\cdot D\otimes H^*\cdot D
\end{eqnarray*}
\end{proof}


\begin{corollary}
The category $CoAlg(\Mm^H)$ (=right $H$-comodule coalgebras) is generated by objects which are finite dimensional. Consequently, $F^H$ has a right adjoint $G^H$ given by 
$$G^H(V)=\lim\limits_{\stackrel{\longrightarrow}{[f:D\rightarrow V]\in \Mm^H,\,D\in fin.dim.CoAlg(\Mm^H)}}D.$$
The limit of a system of $H$-comodule coalgebras $F:\Aa\rightarrow CoAlg(\Mm^H)$ is $(\Hh_0(\lim\limits_{\stackrel{\longleftarrow}{\Aa}}F(a),(q_a)_a),p_0),$ where $q_a$ are the canonical maps of the limit of $H$-comodules $\lim\limits_{\stackrel{\longleftarrow}{\Aa}}F(a)=\lim\limits_{\stackrel{\longleftarrow}{\Aa}}F^H\circ F(a)$.
\end{corollary}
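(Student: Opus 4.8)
\emph{Proof proposal.} The plan is to feed the finiteness theorem just proved into the Special Adjoint Functor machinery of Section~\ref{s.p}, exactly as in the module-coalgebra corollary. The only genuine mathematical input is the finiteness theorem; all three assertions then follow by checking that the hypotheses of formula (\ref{e.co1}) and of Remark~\ref{r.cat} hold for $\Cc=CoAlg(\Mm^H)$, $\Mm=\Mm^H$, $U=F^H$.

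First I would establish the generation claim. Given any $C\in CoAlg(\Mm^H)$, the finiteness theorem shows that every finite subset of $C$ lies in a finite-dimensional subcomodule subcoalgebra; hence the finite-dimensional subcomodule subcoalgebras $D\subseteq C$ form a directed family with union $C$, and the inclusions $D\hookrightarrow C$ are morphisms in $CoAlg(\Mm^H)$. Since $F^H$ preserves colimits and $C=\bigcup D$ as $H$-comodules, this exhibits $C$ as the filtered colimit $\lim\limits_{\longrightarrow}D$ in $CoAlg(\Mm^H)$. As finite-dimensional comodule coalgebras are bounded in cardinality, they form a set up to isomorphism, and being a colimit of such objects for every $C$ they constitute a generating set $\Gg$.

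Next, for the right adjoint, I would invoke the proposition opening this section: $CoAlg(\Mm^H)$ is cocomplete and co-wellpowered and $F^H$ preserves colimits. Together with the generating set $\Gg$ just found and the local smallness of $\Mm^H$, the dual SAFT recalled in Section~\ref{s.p} yields a right adjoint $G^H$ of $F^H$. Specializing the short-hand formula (\ref{e.co1}) to $U=F^H$, $M=V$, and $\Gg$ the finite-dimensional comodule coalgebras produces precisely the displayed colimit for $G^H(V)$. For the limits I would then apply Remark~\ref{r.cat} with the same data: $\Mm^H$ is a Grothendieck category, hence complete and cocomplete, $F^H$ is faithful and colimit-preserving, and both categories share the zero object as initial object with $F^H(0)=0$ (the trivial coalgebra on the zero comodule). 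All hypotheses of the remark being met, its limit formula specializes to $(\Hh_0(\lim\limits_{\longleftarrow}F^H\circ F,(q_a)_a),p_0)$, which is the asserted description of $\lim F$.

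The substantive step is the finiteness theorem, already in hand, so the remainder is bookkeeping. The mildest friction I anticipate is purely at the level of invoking the general machinery: confirming that a directed union of subobjects is genuinely a colimit in $CoAlg(\Mm^H)$ and not merely in $\Mm^H$ (which rests on $F^H$ preserving these filtered colimits), that the finite-dimensional objects form a set, and that $\Mm^H$ is complete with the matching initial object so that Remark~\ref{r.cat} applies verbatim. None of these presents a real obstacle, and the argument runs in close parallel to the module-coalgebra case treated above.
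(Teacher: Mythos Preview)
Your proposal is correct and matches the paper's approach: the corollary is stated without its own proof, being immediate from the finiteness theorem together with the SAFT machinery of Section~\ref{s.p} (Proposition~\ref{p.cowp}, formula~(\ref{e.co1}), and Remark~\ref{r.cat}), exactly as you outline. The parallel with the module-coalgebra corollary is also the intended one.
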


If $H$ is a Hopf algebra with antipode $S$, the category $CoAlg(\Mm^H)$ has special systems of matrix-like generators. Recall that if $M$ is a finite dimensional $H$-comodule, then the dual space $M^*$ has a structure of a right $H$-comodule, and $\Mm^H$ has right dual objects (we refer to \cite[Chapter 2]{Bakalov} for right and left duals in monoidal categories). If $(v_i)_i; (v_i^*)_i$ are finite dual bases of $M$, then the maps $co_M:\KK\rightarrow M\otimes M^*$, $co_M(1)=\sum\limits_{i}v_i\otimes v_i^*$, and $ev_M:M^*\otimes M\rightarrow \KK$, $ev_M(v^*\otimes v)=v^*(v)$ are called co-evaluation and evaluation respectively. 
Recall also that the right $H$-comodule $V\otimes V^*$ becomes an algebra in $\Mm^H$ with multiplication given by the canonical map $1_V \otimes ev_V\otimes 1_{V^*}:V\otimes (V^*\otimes V)\otimes V^*\cong (V\otimes V^*)\otimes (V\otimes V^*)\longrightarrow V\otimes \KK\otimes V^*\cong (V\otimes V^*)$ and unit $co_V:\KK\rightarrow V\otimes V^*$. Similarly, the right $H$-comodule $V^*\otimes V$ becomes a coalgebra in $\Mm^H$ via the analogous dual maps.

\vspace{.2cm}

We recall a few well known facts to prove the following theorem on generators in $CoAlg(\Mm^H)$. If $U,V,W\in \Mm^H$, then $\Hom^H(U\otimes V,W)=\Hom^H(U,W\otimes V^*)$, i.e. $V\otimes W^*$ is the internal Hom from $W$ to $V$ in $\Mm^H$ \cite{Bakalov}. In particular, if $(A,m,u)$ is an algebra in $\Mm^H$, $\Hom^H(A\otimes A,A)=\Hom^H(A,A\otimes A^*)$. If $(v_i)_{i=1,\dots,n}, (v_i^*)_{i=1,\dots,n},$ is a dual basis of $A$, then this isomorphism takes $m:A\otimes A\rightarrow A$ to $\varphi(m)=(a\longmapsto \sum\limits_iav_i\otimes v_i^*)$. Of course, as a $\KK$-algebra, $A\otimes A^*$ is simply isomorphic to $M_n(\KK)$; hence, if we forget the $H$-comodule structures, if $m:A\otimes A\rightarrow A$ is the multiplication of $A$, then $\varphi(m):A\rightarrow A\otimes A^*$ is a morphism of algebras corresponding to the left regular representation of $A$. By the canonical isomorphism $\Hom^H(U\otimes V,W)=\Hom^H(U,W\otimes V^*)$, $\varphi(m)$ is a morphism in $\Mm^H$ if $m$ is so, and therefore, for a $H$-comodule algebra $A$, $\varphi(m):A\rightarrow A\otimes A^*$ is a morphism of $H$-comodule algebras. Also, $\varphi(m)$ is obviously injective, so it is a monomorphism in $Alg(\Mm^H)$. Hence, we have

\begin{theorem}
The finite dimensional algebras of the form $V\otimes V^*$ for finite dimensional $H$-comodules $V$, form a system of cogenerators in the category $fdAlg(\Mm^H)$ of finite dimensional algebras in $\Mm^H$ (and also in $Alg(\Mm^H)$). The coalgebras $V^*\otimes V$ form a system of generators of $CoAlg(\Mm^H)$ (= the category of $H$-comodule coalgebras). 
\end{theorem}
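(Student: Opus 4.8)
The plan is to handle the two assertions separately, observing that the discussion preceding the statement already supplies the first, and that the second follows from it by dualizing inside the rigid monoidal category of finite dimensional $H$-comodules.

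For the cogenerator statement I would simply reuse the morphism $\varphi(m)\colon A\to A\otimes A^*$ constructed just above. For any finite dimensional $H$-comodule algebra $A$ this is an injective morphism of $H$-comodule algebras, hence a monomorphism in $fdAlg(\Mm^H)$, and its target $A\otimes A^*$ has the required form $V\otimes V^*$ with $V=A$. Thus every object of $fdAlg(\Mm^H)$ embeds into a member of the family $\{V\otimes V^*\}$, which is exactly the cogeneration condition: given distinct $f,g\colon B\to B'$, composing with an embedding $B'\hookrightarrow V\otimes V^*$ separates them because monomorphisms are left cancellable.

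For the generator statement I would pass through the linear duality $(-)^*$ on finite dimensional right $H$-comodules. This functor is available because $H$ has an antipode $S$, so that $M^*$ carries the antipode-twisted $H$-comodule structure recalled above; it is a contravariant monoidal equivalence of the category of finite dimensional $H$-comodules which sends algebra objects to coalgebra objects and back, interchanges the algebra $V\otimes V^*$ with the comatrix-type coalgebra $V^*\otimes V$ (compatibly with $M_n=V\otimes V^*$ and $M_n^c=V^*\otimes V$ over $\KK$), and turns monomorphisms into epimorphisms. Now let $C$ be a finite dimensional $H$-comodule coalgebra. Its dual $C^*$ is a finite dimensional $H$-comodule algebra, so the cogenerator step yields a monomorphism $\varphi(m)\colon C^*\hookrightarrow C^*\otimes(C^*)^*=C^*\otimes C$, of the form $V\hookrightarrow V\otimes V^*$ with $V=C^*$. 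Dualizing produces an epimorphism of $H$-comodule coalgebras $V^*\otimes V\twoheadrightarrow (C^*)^*=C$, where $V^*\otimes V=C\otimes C^*$. Hence every finite dimensional $H$-comodule coalgebra is an epimorphic image of some $V^*\otimes V$.

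It then remains to pass to the whole category. The Corollary to the finiteness theorem already provides that the finite dimensional $H$-comodule coalgebras form a set of generators of $CoAlg(\Mm^H)$, so it suffices to note that a family through which all of these generators factor by epimorphisms is itself generating: if $f\neq g$ are morphisms out of $C$, choose a finite dimensional $D$ and a map $h\colon D\to C$ with $fh\neq gh$, and precompose with an epimorphism $e\colon V^*\otimes V\twoheadrightarrow D$; right cancellability of $e$ gives $f(he)\neq g(he)$, so the maps from the objects $V^*\otimes V$ already separate $f$ and $g$. The main obstacle is the middle step: making the duality precise enough that it genuinely carries $V\otimes V^*$ to $V^*\otimes V$ and the specific monomorphism $\varphi(m)$ to a surjection of comodule coalgebras requires keeping track of the antipode-twisted comodule structure on duals and of the reordering in $(X\otimes Y)^*\cong Y^*\otimes X^*$. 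By contrast, the parenthetical extension of the cogenerator claim from $fdAlg(\Mm^H)$ to all of $Alg(\Mm^H)$ is the one point that is not purely formal, and I would address it by reducing to finite dimensional data via finite duals rather than by the single embedding $\varphi(m)$.
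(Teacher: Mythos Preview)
Your proposal is correct and follows the paper's intended line exactly: the paper simply writes ``Hence, we have'' after constructing the monomorphism $\varphi(m)\colon A\hookrightarrow A\otimes A^*$, so the cogenerator claim is already done, and the generator claim is meant to follow by the dualization you spell out, combined with the preceding finiteness corollary to pass from finite dimensional comodule coalgebras to all of $CoAlg(\Mm^H)$.

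One technical point deserves more care than you gave it, and it is precisely the point you flag as the ``main obstacle.'' You write $(C^*)^*=C$ and thereby obtain an epimorphism onto $C$. As vector spaces this is fine, but as right $H$-comodules the double right dual carries the $S^2$-twisted coaction, so in general $(C^*)^*\not\cong C$ in $\Mm^H$ unless $S^2$ is inner (in particular if $S$ is bijective one can instead undo the right dual by applying the \emph{left} dual ${}^*(-)$, which satisfies ${}^*(C^*)\cong C$ on the nose and yields the desired epimorphism $C^*\otimes C\twoheadrightarrow C$). The paper is equally terse here, so you are not doing worse than it; but when you write up the details, either assume $S$ bijective and use the left dual to go back, or construct the surjection $C^*\otimes C\to C$ directly from $\Delta$ via the same internal-Hom adjunction $\Hom^H(U\otimes V,W)\cong\Hom^H(U,W\otimes V^*)$ that produced $\varphi(m)$, and verify it is a morphism of comodule coalgebras by hand. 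Your caveat already anticipates this; just make sure it is resolved rather than merely noted.
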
 

We end by noting that besides using the above generators, a construction for cofree coalgebras in $\Mm^H$ similar to that over vector spaces can be done. 
Namely, for an algebra $A$ in $\Mm^H$ one can define a right finite dual $A^0$ of $A$ in a natural way. If the antipode of $H$ is bijective, one can also define a left finite dual ${}^0A$ using left duals. If $V$ is a finite dimensional right $H$-comodule, then the tensor algebra $T({}^*V)$ in the category $\Mm^H$ has a right finite dual $T({}^*V)^0$ which is a coalgebra in $\Mm^H$, and it is the cofree coalgebra on $V$. This can be extended to arbitrary $V$'s as in the case of coalgebras over fields. We leave the details of this construction to the reader. In particular, these considerations apply to the category $YD(H)$ of Yetter-Drinfeld modules over $H$ as well.






\vspace*{3mm} 
\begin{flushright}
\begin{minipage}{148mm}\sc\footnotesize

Adnan Hashim Abdulwahid\\
University of Iowa, \\
Department of Mathematics, MacLean Hall\\
Iowa City, IA, USA\\
{\it E--mail address}: {\tt
adnan-al-khafaji@uiowa.edu}\vspace*{3mm}

Miodrag Cristian Iovanov\\
University of Iowa\\
Department of Mathematics, MacLean Hall \\
Iowa City, IA, USA\\
{\it E--mail address}: {\tt
miodrag-iovanov@uiowa.edu}\vspace*{3mm}

\end{minipage}
\end{flushright}

\end{document}